\DeclareMathAlphabet{\mathpzc}{OT1}{pzc}{m}{it}
\newtheorem{propo}{Proposition}[section]
\newtheorem{lemma}[propo]{Lemma}
\newtheorem{definition}[propo]{Definition}
\newtheorem{thm}[propo]{Theorem}
\def\oX{\overline{X}}
\def\Rep{{\cal R}}
\def\Typ{{\sf Typ}}
\def\cA{{\cal A}}
\def\ogamma{\overline{\gamma}}
\newcommand{\reals}{{\mathds R}}
\newcommand{\naturals}{{\mathds N}}
\newcommand{\eqnsection}{\renewcommand{\theequation}{\thesection.\arabic{equation}}
      \makeatletter \csname @addtoreset\endcsname{equation}{section}\makeatother}
\def\eps{\epsilon}
\def\cC{{\cal C}}
\def\l|{\left|\left|}
\def\r|{\right|\right|}
\def\E{\mathds E}
\def\1{\mathds 1}
\def\prob{{\mathds P}}
\def\ind{{\mathds I}}
\def\ve{\varepsilon}
\def\cS{{\cal S}}
\def\hp{\widehat{p}}
\def\Zh{\widehat{Z}}
\def\onu{\overline{\nu}}
\def\de{{\rm d}}
\def\reals{{\mathds R}}
\def\ux{x}
\def\uX{X}
\def\Tree{{\sf T}}
\def\Ball{{\sf B}}
\def\Edge{{\sf D}}
\def\Graph{{\sf G}}
\def\cBall{\overline{\sf B}}
\def\sTV{\mbox{\tiny\rm TV}}
\def\cX{{\cal X}}
\def\sQ{{\sf Q}}
\begin{document}

\title{
Reconstruction  for models on random graphs}

\author{Antoine Gerschenfeld\\
Ecole Normale Sup\'erieure\\
45, rue d'Ulm 75005 Paris, France\\
 gerschen@clipper.ens.fr\\
\and
Andrea Montanari\\
Departments of Electrical Engineering and Statistics\\
Stanford University, Stanford CA-9305 USA\\
montanari@stanford.edu}

\maketitle
\thispagestyle{empty}

\begin{abstract}
Consider a collection of random variables attached to the vertices of 
a graph.
The reconstruction problem requires to estimate one of them
given `far away' observations. Several theoretical results (and simple 
algorithms) are available when their joint 
probability distribution is Markov with respect  to a tree.
In this paper we consider the case of
sequences of random graphs that converge locally to trees.
In particular, we develop a sufficient condition for the 
tree and graph reconstruction problem to coincide.
We apply such condition to colorings of random graphs.

Further, we characterize the behavior of Ising models 
on such graphs, both with attractive and random interactions 
(respectively, `ferromagnetic' and `spin glass').
\end{abstract}

\vspace{-0.5cm}

%
%
\Section{Introduction and outline}

Let $G=(V,E)$ be a graph, and $X=\{X_i:\, i\in V\}$ a 
proper coloring of its vertices sampled uniformly at random.
The \emph{reconstruction problem} amounts to estimating 
the color of a distinguished (root) vertex $r\in V$, 
when the colors $\{X_j=x_j:\, j\in U\}$ of subset of
vertices are revealed.
In particular, we want to understand whether the revealed 
values induce a substantial bias on the distribution of
$X_i$.

We shall consider the more general setting of
\emph{graphical models}. Such a model is defined
by a graph $G = (V,E)$, and a set of weights
$\psi = \{\psi_{ij}:\; (ij)\in E\}$,  
$\psi_{ij}:\cX\times\cX\to\reals_+$. Given a 
graph-weights pair $(G,\psi)$, we let 
\begin{eqnarray}
\prob\big\{\uX=\ux\big|(G,\psi)\big\}\equiv 
\frac{1}{Z}\, \prod_{(ij)\in E}\psi_{ij}(x_i,x_j)\, ,
\label{eq:GeneralGraphModel}
\end{eqnarray}
where we assume $\psi_{ij}(x,y)= \psi_{ij}(y,x)$.
The example of proper colorings is recovered by letting
$\cX = \{1,\dots,q\}$ ($q$ being the number of colors)
and $\psi_{ij}(x,y)=1$ if $x\neq y$ and $=0$ otherwise.
Ising models from statistical mechanics provide another interesting class,
whereby $\cX = \{+1,-1\}$. In the `ferromagnetic' case the
weights are $\psi_{ij}(+,+) = \psi_{ij}(-,-)=1-\eps$ and
$\psi_{ij}(+,-)=\psi_{ij}(-,+)=\eps$ for some $\eps\in [0,1/2]$.

For economy of notation, we shall often
write $\prob\{\,\cdot\,|G\}$ as a shorthand for  $\prob\{\,\cdot\,|(G,\psi)
\}$, and `the graph $G$' for `the graph-weights pair $(G,\psi)$.'
It is understood that, 
whenever $G$ is given, the weights $\psi$ are given as well.
Further, for $U\subseteq V_N$, 
we let $\uX_U = \{X_j:\, j\in U\}$ and 
$\prob_U\{\ux_U|G\}= \prob\{\uX_U=\ux_U|G\}$ be its marginal distribution
that can be obtained by marginalizing Eq.~(\ref{eq:GeneralGraphModel}).

For $i,j\in V$, let $d(i,j)$ be their graph theoretic
distance. Further for any $t\ge 0$, we let 
$\cBall(i,t)$ be the set of vertices $j$ such that $d(i,j)\ge t$,
(and, by abuse of notation, the induced subgraph).
The reconstructibility question asks whether the `far away'
variables $\uX_{\cBall(r,t)}$ 
provide significant information about $X_r$. 
This is captured by the following definition
(recall that, given two distributions $p$, $q$ on the same space $\cS$,
their total variation distance is 
$||p-q||_{\sTV}\equiv(1/2)\sum_{x\in \cS} |p_x-q_x|$).
\begin{definition}
The reconstruction problem is $(t,\ve)$-\emph{solvable} 
(\emph{reconstructible}) for the graphical model $(G,\psi)$
rooted at $r\in V$ if  
\begin{align*}
\Vert \prob_{r,\cBall(r,t)}\{\,\cdot\, ,\,\cdot\,|G\}-
 \prob_{r}\{\,\cdot\,|G\} \prob_{\cBall(r,t)}\{\,\cdot\,|G\}\Vert_{\sTV} \ge\ve\, .
\end{align*}
\end{definition}

In the following we will consider graphs 
$G$ that are themselves random. 
By this we mean that we will specify a joint distribution of the graph 
$G_N = (V_N=[N],E_N)$, 
of the weights $\{\psi_{ij}\}$, and of the root vertex 
$r$ whose variable we are interested in reconstructing. 
Equation (\ref{eq:GeneralGraphModel}) then specifies the conditional
distribution of $\uX$, given the random structure $(G_N,\psi)$
(again, we'll drop reference to $\psi$).
\begin{definition}
The reconstruction problem is \emph{solvable} 
(\emph{reconstructible}) for the 
sequence of random graphical models $\{G_N\}$ if there exists 
$\ve>0$ such that, for all $t\ge 0$ it is $(t,\ve)$-solvable 
with positive probability, i.e. if
\begin{align}
\Vert \prob_{r,\cBall(r,t)}\{\,\cdot\, ,\,\cdot\,|G_N\}-
 \prob_{r}\{\,\cdot\,|G_N\} \prob_{\cBall(r,t)}\{\,\cdot\,|G_N\}\Vert_{\sTV} \ge\ve\, .
\label{eq:GraphReconstruction}
\end{align}
with positive probability\footnote{Here and below, we say that 
the sequence of events $\{A_N\}$ holds with positive probability 
(wpp) if there exists $\delta>0$ and an infinite sequence
${\cal N}\subseteq \naturals$, such that $\prob\{A_N\}\ge \delta$
for any $N\in {\cal N}$. Notice that, in a random graph, $r$
might be in a small connected component. Therefore
Eq.~(\ref{eq:GraphReconstruction}) cannot be required  to hold
with high probability.}.
\end{definition}

To be specific, we shall assume $G_N$ to be a sparse random graph.
In this case, any finite neighborhood 
of $r$ converges in distribution to a tree \cite{Aldous}.
Further, imagine to mark the boundary vertices of such a neighborhood,
and then take the neighborhood out of $G_N$
(thus obtaining the subgraph denoted above as $\cBall(r,t)$).
The marked vertices will be (with high probability) `far apart'
from each other in $\cBall(r,t)$. This suggests that the corresponding 
random variables $\{X_j\}$ will be approximately independent when the 
tree-like neighborhood is taken out. 
Hence, approximating $G_N$ by its local tree structure might be a
good way to determine correlations between $X_r$ and the boundary
variables $\{X_{j}: d(r,j)=t\}$.
In other words, one would expect reconstructibility on $G_N$ 
to be determined by reconstructibility on the associated random tree.

Of course the above conclusion does not hold in general,
as it is based on a circular argument. We assumed that 
`far apart' variables (with respect to the residual graph $\cBall(r,t)$)
are weakly correlated, to understand whether `far apart' variable (in $G_N$)
are.
In fact, we will prove that tree and graph reconstruction do not coincide
in the simplest example one can think of, namely the Ising ferromagnet
(binary variables with attractive interactions).

On the positive side, we prove a general 
sufficient condition for the tree and graph behaviors to coincide.
The condition has a suggestive geometrical interpretation,
as it requires two independent random configurations $X^{(1)}$
and $X^{(2)}$ to be, with high probability, at an approximately fixed
`distance' from each other. In the example of coloring, we require
two uniformly random independent colorings of the same graph
to take the same value on about $1/q$ of the vertices.
The set of `typical configurations' looks like a sphere 
when regarded from any typical configuration.
Under such a condition, the above argument can be put on firmer basis.
We show that, once the the neighborhood of the root $r$ is taken out,
boundary variables become roughly independent. This in turns implies
that graph and tree reconstruction do coincide. 

We apply this sufficient condition to the Ising 
spin glass (where the condition 
can be shown to hold as a consequence of a recent 
result by Guerra and Toninelli \cite{Guerra}), and to
antiferromagnetic colorings of random graphs
(building on the work of Achlioptas and Naor \cite{AchlioptasNaor}). 
In both cases we will introduce a family of graphical models parametrized
by their
average degree. It is natural to expect reconstructibility to 
hold at large degrees (as the graph is `more connected')
and not to hold at small average degrees (since the graph `falls' apart into 
disconnected components). In the spin glass case we are indeed able to 
estabilish a threshold behavior (i.e. a critical degree value above 
which reconstruction is solvable). While we didn't achieve the same 
for colorings, we essentially reduced the problem to establishing a 
threshold for the tree model.
%
%
\SubSection{Applications and related work}

Let us discuss a selection of related problems that are  
relevant to our work.

\vspace{0.05cm}

{\bf Markov Chain Monte Carlo} (MCMC) algorithms provide a well established
way of approximating marginals of the distribution 
(\ref{eq:GeneralGraphModel}).
If the chain is reversible and has local updates, the mixing time is 
known to be related to the correlation decay properties
of the stationary distribution $\prob\{\,\cdot\,|G_N\}$ 
\cite{StaticDynamics1,StaticDynamics2}. In this context,
correlations between $X_r$ and $\uX_{\cBall(r,t)}$ are usually 
characterized by measures of the type
$\Delta(t)
\equiv\sup_{\ux}\Vert \prob_{r|\cBall(r,t)}\{\,\cdot\, |\ux_{\cBall(r,t)},G_N\}- \prob_{r}\{\,\cdot\,|G_N\} \Vert_{\sTV}$.
The `uniqueness ' condition requires $\Delta(t)$ to decay at large $t$,
and is easily shown to imply non-reconstructibility.
On graphs with sub-exponential growth, a fast enough decay is
a necessary and sufficient condition for fast mixing.
On the other hand, in more general cases this is too strong a criterion,
and one might want to replace it with the non-reconstructibility one.

In \cite{BergerEtAl} it was proved that non-reconstructibility
is equivalent to polynomial spectral gap for a class of models on trees.
The equivalence was sharpened in \cite{Martinelli}, showing that
non-reconstructibility is equivalent to fast mixing in the same models.
Further, \cite{BergerEtAl} proved that 
non-reconstructibility is a necessary condition for 
fast mixing on general graphs. While a converse does not hold in general,
non-reconstructibility is sufficient 
for rapid decay of the variance of local functions (which is often
regarded as the criterion for fast dynamics in physics) \cite{MonSem}.

\vspace{0.05cm}

{\bf Random constraint satisfaction problems.} 
Given an instance of a constraint satisfaction problem (CSP),
consider the uniform distribution over its solutions. This takes the form 
(\ref{eq:GeneralGraphModel}), where $\psi_{ij}$ is the indicator function 
over the constraint involving variables $x_i$, $x_j$ being satisfied
(Eq.~(\ref{eq:GeneralGraphModel}) is trivially generalized to
$k$-variables constraints).
For instance, in coloring it is the indicator function on
$x_i\neq x_j$. 

Computing the marginal $\prob_r\{\,\cdot\,|G_N\}$ can be useful both for
finding and for counting the solutions of such a CSP.
Assume to be able to generate \emph{one} uniformly random solution 
$\uX$. In general, this is not sufficient to approximate the
marginal of $X_i$ in any meaningful way. 
However one can try the following: fix all the variables 
`far from $r$' to take the same value as in the sampled configuration,
namely $X_{\cBall(r,t)}$, and compute the conditional distribution at the
root.
If the graph is locally tree-like, the conditional distribution of $X_r$
can be computed through an efficient dynamic programming procedure.
The result of this computation needs not to be near 
the actual marginal. However, non-reconstructibility implies 
the result to be with high probability within $\ve$ 
(in total variation distance) from the marginal.

As a consequence, a single sample (a single random solution $\ux$) 
is sufficient to approximate the marginal $\prob_r\{\,\cdot\,|G_N\}$.
The situation is even simpler under the sufficient condition in 
our main theorem (Theorem \ref{thm:graph<->tree}). In fact this implies that
the boundary condition $\ux_{\cBall(r,t)}$ can be replaced
by an iid uniform boundary.

For random  CSP's, $G_N$ becomes a sparse random graph.
Statistical mechanics studies \cite{MarcGiorgioRiccardo} 
suggest that, for typical instances  the set of solutions decomposes 
into `clusters' at sufficiently  large constraint density
\cite{Mora,AchlioRicci}.
This leads to the speculation that sampling from the uniform measure 
$\prob\{\,\cdot\,|G_N\}$ becomes harder in this regime.

The decomposition in clusters is related to reconstructibility, as per the
following heuristic argument.
Assume the set of solutions to be splitted into clusters, 
and that two solutions whose Hamming distance is smaller than $N\ve$
belong to the same cluster.
Then knowing the far away variables $\ux_{\cBall(r,t)}$ (i.e. all but a bounded
number of variables) does determine the cluster.
This in turns provides some information on $X_r$.

In fact, it was conjectured in \cite{MezardMontanari} 
that tree and graph reconstruction thresholds should coincide for
`frustrated' models on  random graphs. Both should
coincide with the clustering 
phase transition in the set of solutions \cite{OurPNAS}.

\vspace{0.05cm}

{\bf Statistical inference and message passing}.
Graphical models of the form  (\ref{eq:GeneralGraphModel}) 
are used in a number of contexts, from image processing
to artificial intelligence, etc. 
Statistical inference requires to compute marginals 
of such a distribution and message passing algorithms (in particular, 
belief  propagation, BP) are the methods of choice for accomplishing this 
task. 

The (unproven) assumption in such algorithms is that, if a tree neighborhood of
vertex 
$i$ is cut away from $G_N$, then the variables $\{X_j\}$ on the boundary 
of this tree are approximately independent. Assuming the marginals
of the boundary variables to be known, the marginal of $X_i$
can be computed through dynamic programming. Of course the marginals
to start from are unknown. However, the dynamic programming procedure
defines an mapping on the marginals themselves. In BP this mapping
is iterated recursively over all the nodes,  without convergence 
guarantees.

Lemma \ref{degenerescence} shows that, under the
stated conditions, the required independence condition does indeed
hold. As stressed above, this is instrumental in proving equivalence 
of graph and tree reconstructibility in Theorem \ref{thm:graph<->tree}.

The connection with message passing algorithm is further 
explored in \cite{DemboMontanari}. Roughly speaking that paper proves that,
if the reconstruction problem is unsolvable, than BP admits an approximate
fixed point that allows to compute the correct marginals.

\vspace{0.05cm}

Reconstruction problems also emerge in a variety of other contexts:
$(i)$ Phylogeny \cite{Phylogeny}
(given some evolved genomes,
one aims at reconstructing the genome of their common ancestor); 
$(ii)$ Network tomography \cite{Tomo}
(given end-to-end delays in a computer network,
infer the link delays in its interior); 
$(iii)$ Statistical mechanics \cite{Georgii,BleherRuizZagrebnov} 
(reconstruction  being related to the extremality of Gibbs measures).
%
%
\SubSection{Previous results}

If the graph $G_N$ 
is a tree, the reconstruction problem is relatively well understood
\cite{TreeRec}.
The fundamental reason is that the distribution 
$\prob\{\uX=\ux|G_N\}$ admits a simple description.
First sample the root variable $X_r$ from its marginal $\prob\{X_r=x_r|G_N\}$.
Then recursively for each node $j$, sample its children $\{X_l\}$
independently conditional on their parent value. 

Because of this Markov structure, one can prove a recursive
distributional equation for the conditional marginal at the root
$\prob_{r|\cBall(r,t)}\{\,\cdot\,|\uX_{\cBall(r,t)},G_N\}\equiv
\eta_t(\,\cdot\,)$
given the variable values at generation $t$. Notice that this is
a random quantity even for a deterministic graph $G_N$,
because $\uX_{\cBall(r,t)}$ is itself drawn randomly from the distribution 
$\prob\{\,\cdot\,|G_N\}$. Further, it contains all the information
(it is a `sufficient statistic') in the boundary
about the root variable $X_r$. 
In fact asymptotic behavior of $\eta_t(\,\cdot\,)$
as $t\to\infty$ then determines the solvability 
of the reconstruction problem.
Studying the asymptotic behavior of the sequence 
$\eta_t(\,\cdot\,)$ (which satisfies a recursive distributional 
equation) is the standard approach to tree reconstruction.

Among the other results, reconstructibility has been thoroughly characterized
for Ising models on generic trees 
\cite{BleherRuizZagrebnov,EvaKenPerSch,Asymmetric}. 
For an infinite tree $\Tree$ the reconstruction problem is solvable 
if and only if br$(\Tree)(1-2\eps)^2>1$,
whereby (for the cases treated below) br$(\Tree)$  coincides with the mean 
descendant number of any vertex. This result establishes a sharp threshold 
in the tree average degree (or in the parameter $\eps$), 
that we shall generalize to random graphs below. However, as we
will see, the  behavior is richer than in the tree case.

Reconstruction on general graphs poses new challenges,
since the above recursive description of
the measure $\prob\{\,\cdot\,|G_N\}$ is lacking.  
The result of \cite{BergerEtAl} allows to deduce 
non-reconstructibility from fast mixing of reversible MCMC with local 
updates. However, proving fast mixing is far from an easy task.
Further, the converse does not usually hold (one can have slow mixing
and non-reconstructibility).

An exception is provided by the recent paper by Mossel, Weitz and 
Wormald \cite{MWW} that
establishes a threshold for fast mixing for weighted independent sets
on random bipartite graphs (the threshold being in the weight parameter 
$\lambda$). Arguing as in Section 
\ref{sec:IsingFerro}, it can be shown that this is also the graph 
reconstruction threshold. This result is analogous to ours for the 
ferromagnetic Ising model: it provides an example in which the graph 
reconstruction threshold does not coincide with the 
tree reconstruction threshold. 
In both cases the graph reconstruction threshold coincides
instead with the tree `uniqueness threshold' 
(i.e. the critical parameter 
for the uniqueness condition mentioned above to hold).
%
%
\SubSection{Basic definitions}

We  consider two families of random graphical models:
\emph{regular} and \emph{Poisson} models. In both
cases the root $r\in V$ is uniformly 
random and independent of $G_N$.
A \emph{regular ensemble} is specified by assigning an alphabet 
$\cX$ (the variable range), a degree $(k+1)$ and and edge weight 
$\psi:\cX\times\cX\to\reals_+$. For any $N>0$, 
a random model is defined by letting $G_N$ be a uniformly random regular
graph of degree $(k+1)$ over vertex set  $V=[N]$. The joint distribution
of $(X_1,\dots,X_N)$ is given by Eq.~(\ref{eq:GeneralGraphModel}),
with $\psi_{ij}(\,\cdot\, ,\,\cdot\,)=\psi(\,\cdot\, ,\,\cdot\,)$.
A variation of this ensemble is obtained by letting 
$G$ be a random regular \emph{multi-graph} according to the configuration
model \cite{Bollobas}
(notice that our definitions make sense for multigraphs as well).
Indeed in the following we assume this model when working with regular graphs.

As an example, the 
\emph{random regular Ising ferromagnet} is obtained by letting 
$\cX=\{+1,-1\}$ and, for some $\eps\le 1/2$, 
$\psi(x_1,x_2) = 1-\epsilon$ if $x_1=x_2$
and $\psi(x_1,x_2)=\epsilon$ otherwise. 

Specifying a \emph{Poisson ensemble} requires
an alphabet $\cX$, a \emph{density} $\gamma\in\reals_+$, a finite collection of
weights $\{\psi_a(\,\cdot\, ,\,\cdot\,) : \, a\in\cC\}$,
and a probability distribution $\{p(a):\, a\in \cC\}$ over the weights.
In this case $G$ is a multigraph where the number edges among any pair of 
vertices $i$ and $j$ is an independent Poisson random variable of 
parameter $2\gamma/n$. Each loop $(i,i)$ is present with multiplicity 
which is Poisson 
of mean\footnote{Notice that in a typical realization there will be 
only a few loops and non-simple edges.} $\gamma/n$. 
Finally, for each edge in the 
multi-graph, we draw an independent random variable $a$ with distribution
$p(\,\cdot\,)$ and set $\psi_{ij}(\,\cdot\, ,\, \cdot\,)=
\psi_a(\,\cdot\, ,\,\cdot\,)$. 

Two examples of Poisson ensembles to be treated below are 
the \emph{Ising spin glass}, and \emph{antiferromagnetic colorings}
(aka `antiferromagnetic Potts model').
In the first case $\cX = \{+1,-1\}$ and two
type of weights appear with equal probability (i.e. $\cC = \{+,-\}$
and $p(+)=p(-)=1/2$): $\psi_{+}(x_1,x_2) = 1-\eps$ for $x_1=x_2$,
$\psi_{+}(x_1,x_2) =\eps$ for $x_1\neq x_2$, while
 $\psi_{-}(x_1,x_2) = \eps$ for $x_1=x_2$,
$\psi_{-}(x_1,x_2) =1-\eps$ for $x_1\neq x_2$.
For proper colorings $\cX = \{1,\dots,q\}$, and $|\cC|=1$ with
$\psi(x_1,x_2) = 1$
if $x_1\neq x_2$, and $\psi(x_1,x_2) =\epsilon<1$ otherwise (for $\epsilon=0$
one recovers the uniform measure over proper colorings of $G$).

Both graphical model ensembles defined above 
converge locally to trees. In the case of regular models, the corresponding
tree model is an infinite rooted tree of uniform degree $(k+1)$,
each edge being associated the same weight $\psi(\,\cdot\, ,\,\cdot\,)$. 
For Poisson models, the relevant tree is a rooted Galton-Watson
tree with Poisson distributed degrees of mean $2\gamma$. Each edge 
carries the weight $\psi_a(\,\cdot\, ,\,\cdot\,)$ independently with 
probability $p(a)$.

Given such infinite weighted trees, let $\Tree_\ell$, $\ell\ge 0$
be the weighted subgraph obtained by truncating it at depth $\ell$.
One can introduce  random variables
$\uX=\{X_j:j\in\Tree_\ell\}$, by defining $\prob\{\uX=\ux|\Tree_\ell\}$
 as in Eq.~(\ref{eq:GeneralGraphModel}) (with $G$ replaced
by $\Tree_\ell$).
With an abuse of notation we shall call $r$ the 
root of $\Tree_\ell$.
It is natural to ask whether reconstruction on the original 
graphical models and on the corresponding trees are related.
\begin{definition}
Consider a sequence of random graphical models $\{G_N\}$ 
(distributed according either to the regular or to the Poisson ensemble),
and let $\{\Tree_\ell\}$ be the corresponding 
sequence of tree graphical models.
We say that the reconstruction problem is \emph{tree-solvable} for 
the sequence $\{G_N\}$ if there exists $\ve>0$ such that, for any 
$t\ge 0$
\begin{eqnarray}
\Vert \prob_{r,\cBall(r,t)}\{\,\cdot\, ,\,\cdot\,|\Tree_\ell\}-
 \prob_{r}\{\,\cdot\,|\Tree_\ell\} \prob_{\cBall(r,t)}\{\,\cdot\,|\Tree_\ell\}\Vert_{\sTV} >\ve\, ,
\label{eq:TreeReconstruction}
\end{eqnarray}
with positive probability (as $\ell\to\infty$)
\end{definition}
Notice that tree-reconstruction is actually a question on the sequence of
tree graphical models $\{\Tree_{\ell}\}$ indexed by $\ell$. 
The only role of the original random graphs sequence $\{G_N\}$ is to
determine the distribution of $\Tree_{\ell}$.

Despite the similarity of Eqs.~(\ref{eq:TreeReconstruction}) and 
(\ref{eq:GraphReconstruction}), passing from the original graph
to the tree is a huge simplification because $\prob\{\,\cdot\,|\Tree_\ell\}$
has a simple description as mentioned above. For instance,
in the case of a ferromagnetic Ising model, one can sample 
the variables $X_j$ on the tree through a `broadcast' process.
First, generate the root value $X_r$
uniformly at random in $\{+1,-1\}$. Then recursively,
for each node $j$, generate the values of its children 
$\{l\}$ conditional on $X_j=x_j$ by letting $X_l=x_j$ independently with
probability $1-\eps$, and   $X_l=-x_j$ otherwise. Analogous descriptions 
exist for the spin-glass and colorings models.
%
%
\SubSection{Main results}

Our first result is a sufficient condition 
for graph-reconstruction to be equivalent to tree reconstruction.
In order to phrase it, we need to define the `two-replicas type.'
Consider a graphical model $G_N$ and two two iid 
assignments of the variables $\uX^{(1)}$, $\uX^{(2)}$ with common distribution
$\prob\{\,\cdot\,|G_N\}$ 
(we will call them \emph{replicas} following the spin glass
terminology). The \emph{two replica type} is a matrix
$\{\nu(x,y):\, x,y\in\cX\}$ where $\nu(x,y)$ counts the fraction 
of vertices $j$ such that $X^{(1)}_j=x$ and $X^{(2)}_j=y$.
(Conversely, the set of distributions $\nu$ on $\cX\times\cX$ such that 
$N\nu(x,y)\in \naturals$ will be called the set of 
\emph{valid two-replicas types} $\Rep_N$. When we drop the 
constraint $N\nu(x,y)\in\naturals$, we shall use $\Rep$.) 

The matrix $\nu$ is random.
If $\prob\{\,\cdot\,|G_N\}$ were the uniform distribution,
then $\nu$ would concentrate around $\onu(x,y)\equiv 1/|\cX|^2$. 
Our sufficient condition requires this to be approximately true.
\begin{thm}\label{thm:graph<->tree}
Consider a sequence of random Poisson graphical models $\{G_N\}$,
and let $\nu(\,\cdot\, ,\,\cdot\,)$ be the type of  two iid
replicas $\uX^{(1)}$, $\uX^{(2)}$, and $\Delta\nu(x,y)\equiv\nu(x,y)-\onu(x,y)$. 
Assume that, for any $x\in\cX$,
\begin{eqnarray}
\E\left\{\left.\left[\Delta\nu(x,x)-2|\cX|^{-1}\sum_{x'}
\Delta\nu(x,x')\right]^2\right.\right\} 
\stackrel{N}{\to} 0\, . \label{eqn:graph<->tree}
\end{eqnarray}
Then the reconstruction problem for $\{G_N\}$ is solvable if and only if
it is tree-solvable.
\end{thm}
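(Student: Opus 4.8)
The plan is to replace the full `far-away' set $\cBall(r,t)$ by the sphere $D\equiv\{j:d(r,j)=t\}$, to use the factorization of the Gibbs measure across the separator $D$, and to control the one factor that has no tree description — the partition function of the residual graph with $D$ pinned — through Lemma~\ref{degenerescence}, which is exactly where hypothesis~(\ref{eqn:graph<->tree}) enters. First I would observe that in $G_N$ and in $\Tree_\ell$ alike the sphere $D$ separates $r$ from $\cBall(r,t)\setminus D$, so $\uX_D$ is a sufficient statistic for $X_r$ within $\uX_{\cBall(r,t)}$. Hence, on one side, the functional in (\ref{eq:GraphReconstruction}) dominates $\Vert\prob_{r,D}\{\cdot,\cdot|G_N\}-\prob_r\{\cdot|G_N\}\prob_D\{\cdot|G_N\}\Vert_{\sTV}$, since marginalization cannot increase total variation; on the other side, because $X_r$ takes finitely many values with marginal bounded away from $0$, one has $I(X_r;\uX_{\cBall(r,t)})=I(X_r;\uX_D)\le C\,\Vert\prob_{r,D}-\prob_r\prob_D\Vert_{\sTV}$ and thus, by Pinsker, the functional in (\ref{eq:GraphReconstruction}) is at most $\sqrt{C\,\Vert\prob_{r,D}-\prob_r\prob_D\Vert_{\sTV}/2}$. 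So graph reconstruction is solvable iff the correlation of $X_r$ with $\uX_D$ persists as $t\to\infty$, and likewise on the tree.

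Next I would split $G_N$ along $D$ into the depth-$t$ neighbourhood $B$ of $r$ and the residual graph $\cBall(r,t)$, which meet exactly in $D$. Factoring (\ref{eq:GeneralGraphModel}) over the edges of the two pieces gives $\prob\{X_r=x_r,\uX_D=\ux_D|G_N\}\propto W(x_r,\ux_D)\,Z_{\rm out}(\ux_D)$, with $W$ the within-$B$ weight and $Z_{\rm out}$ the residual partition function, both with $\uX_D$ pinned; moreover $\prob\{\uX_D=\ux_D|\cBall(r,t)\}=Z_{\rm out}(\ux_D)/\sum_{\ux_D'}Z_{\rm out}(\ux_D')$, so $\prob_{r,D}-\prob_r\prob_D$ is an explicit bilinear expression in $W$ and $Z_{\rm out}$. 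Now Lemma~\ref{degenerescence} — whose hypothesis is (\ref{eqn:graph<->tree}) — says that this residual boundary law is, with positive probability for large $N$, within $o(1)$ in total variation of the uniform law, i.e. $Z_{\rm out}$ is constant in $\ux_D$ up to vanishing relative error. Substituting, the $Z_{\rm out}$ factors cancel between the two terms, leaving — up to $o(1)$, using that the weights $\psi$ and hence the ratios of $W$ are bounded — exactly $\tilde\prob_{r,D}-\tilde\prob_r\tilde\prob_D$, where $\tilde\prob(x_r,\ux_D)\propto W(x_r,\ux_D)$ is the free-boundary Gibbs measure on the weighted graph $B$. On the local-convergence event $\{B\cong\Tree_t\}$, whose probability is bounded below and tends to the truncated Galton--Watson law, $\tilde\prob$ is the depth-$t$ tree measure, which for the symmetric models considered agrees with the marginal onto depth $\le t$ of $\prob\{\cdot|\Tree_\ell\}$ (the subtree partition functions entering that marginalization are value-independent, by symmetry of $\psi$). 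So on this event the graph $D$-functional equals the tree reconstruction functional on $\Tree_t$ up to $o(1)$.

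The conclusion is then bookkeeping with the `with positive probability' quantifiers. If reconstruction is tree-solvable, fix the associated $\ve$: for each $t$ the tree functional exceeds $\ve$ with positive probability, so the graph $D$-functional, hence the functional in (\ref{eq:GraphReconstruction}), exceeds $\ve/2$ with positive probability, and graph reconstruction is solvable. Conversely, if graph reconstruction is solvable with parameter $\ve$, then since $\prob\{B\ \text{contains a cycle}\}\to0$ the event that (\ref{eq:GraphReconstruction}) exceeds $\ve$ and $B\cong\Tree_t$ still has positive probability, and on it the two estimates above force the tree functional on $\Tree_t$ above $\ve/2-o(1)$; since this holds for every $t$, reconstruction is tree-solvable.

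I expect Lemma~\ref{degenerescence} to be the real obstacle. Everything else is manipulation of the local structure together with elementary inequalities, whereas the claim that the far-apart vertices of $D$ decorrelate once the ball is removed is a priori circular — it is essentially the non-reconstruction statement one is after. Hypothesis (\ref{eqn:graph<->tree}) is what breaks the circle: since $\sum_{\ux_D}\prob\{\uX_D=\ux_D\}^2=\prob\{\uX^{(1)}_D=\uX^{(2)}_D\}$, a second-moment computation should convert concentration of the two-replica type near $\onu$ into $L^2$-closeness of the boundary law to uniform. The delicate points will be making this quantitative when $|D|$ is itself random, and transferring the overlap control from $G_N$ to the residual graph $\cBall(r,t)$; a further technical nuisance is that the argument wants $\psi$ bounded away from $0$, so the zero-temperature coloring case has to be handled by a limiting argument.
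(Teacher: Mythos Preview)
Your strategy --- factor the Gibbs measure across a separating sphere, kill the outside factor via Proposition~\ref{degenerescence}, and invoke local tree convergence on the inside --- is exactly the paper's. The sufficient-statistic reduction from $\cBall(r,t)$ to $D=\Edge(r,t)$ via mutual information and Pinsker is a legitimate variant; the paper instead keeps the full $\cBall(r,t)$ and reaches the same place through the elementary total-variation estimate of Lemma~\ref{lemma:Simple}, packaged as the inequality~(\ref{eq:treeBoundary}).

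There is, however, one genuine gap. You cut at radius $t$, obtaining the free-boundary Gibbs measure $\tilde\prob$ on $\Ball(r,t)\cong\Tree_t$, and then assert that this coincides with the depth-$t$ marginal of $\prob\{\,\cdot\,|\Tree_\ell\}$ ``by symmetry of $\psi$.'' That symmetry is not among the hypotheses: condition~(\ref{eqn:graph<->tree}) forces the \emph{graph} marginals to be near uniform (Proposition~\ref{degenerescence}), but it does not force the subtree partition functions on $\Tree_\ell$ to be root-value independent, and without that the free-boundary law on $\Tree_t$ need not agree with the marginal of $\prob\{\,\cdot\,|\Tree_\ell\}$. Since tree-solvability is defined through $\Tree_\ell$ with $\ell\to\infty$, your final identification does not close for general Poisson ensembles, only for the symmetric examples (Ising, Potts) appearing later.

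The paper sidesteps this by decoupling the two radii: it cuts at $\Edge(r,\ell)$ for an arbitrary $\ell\ge t$, so that after Proposition~\ref{degenerescence} and local convergence the inside measure is literally $\prob\{\,\cdot\,|\Tree_\ell\}$, and then compares the radius-$t$ functionals on $G_N$ and on $\Tree_\ell$ directly --- this is precisely what the bound~(\ref{eq:treeBoundary}) records. No symmetry of $\psi$ is needed. Your argument becomes correct with this single change; the concerns you flag at the end (random boundary size, transferring the overlap hypothesis to the residual graph) are handled exactly by Propositions~\ref{prop:BoundNeighborhood} and~\ref{prop:poisson_independance}.
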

\emph{Remark 1:} Notice that the expectation in Eq.~(\ref{eqn:graph<->tree})
is both over the two replicas $\uX^{(1)}$, $\uX^{(2)}$ 
(which the type $\nu(\,\cdot\,,\,\cdot\,)$ is a function of) conditional
on $G_N$, and over $G_N$. Explicitly $\E\{\cdots\} = 
\E\{\E[\cdots|G_N]\}$.  
\emph{Remark 2:} In fact, as is hinted by the proof, the
condition (\ref{eqn:graph<->tree}) can be weakened, e.g.
$\onu(\,\cdot\,\,\cdot\,)$ can be chosen more generally than the uniform
matrix. This will be treated in a longer publication.

The condition (\ref{eqn:graph<->tree})
emerges naturally in a variety of contexts, a notable one 
being second moment method applied to random constraint 
satisfaction problems \cite{AchlioptasNaorPeres}.
As an example,  consider proper colorings of random graphs. 
In bounding on the colorability threshold, one computes the second
moment of the number of colorings,
and, as an intermediate step, an upper bound on the large deviations of the
type $\nu$.  Oversimplifying, one might interpret 
Theorem \ref{thm:graph<->tree} by saying that, when 
second moment method works, then tree and graph reconstruction are equivalent.
Building on \cite{AchlioptasNaor} we can thus establish the following. 
\begin{thm}\label{thm:Colorings}
Consider antiferromagnetic $q$-colorings of a 
Poisson random graph and let $\gamma_q \equiv(q-1)\log(q-1)$.
Then there exists a set $\Gamma$ of zero (Lebesgue) measure such that 
the following is true. If $\gamma\in [0,\gamma_q)\setminus\Gamma$
and $\eps\in (0,1]$, then
the reconstruction problem is solvable if and only if
it is tree solvable.
\end{thm}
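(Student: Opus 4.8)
The plan is to deduce the theorem from Theorem~\ref{thm:graph<->tree}: the whole task is to check that antiferromagnetic $q$--colorings of the Poisson graph of density $\gamma$ satisfy the two--replica condition~(\ref{eqn:graph<->tree}) for every $\gamma\in[0,\gamma_q)$ outside a Lebesgue--null set $\Gamma$, which the argument will identify. Here $\onu(x,y)=q^{-2}$ and the bracket in~(\ref{eqn:graph<->tree}) is a fixed linear functional of the entries of $\Delta\nu$, so a crude bound gives, with $C=C(q)$,
\[
\E\Big\{\big[\Delta\nu(x,x)-\tfrac2q\sum_{x'}\Delta\nu(x,x')\big]^{2}\Big\}\ \le\ C\,\E\Big\{\sum_{x,y}\big(\nu(x,y)-q^{-2}\big)^{2}\Big\}\ =:\ C\,\E\big\{\Vert\Delta\nu\Vert_2^{2}\big\}.
\]
Since the entries of $\nu$ lie in $[0,1]$, proving $\E\{\Vert\Delta\nu\Vert_2^{2}\}\to 0$ is the same as proving $\nu\to\onu$ in probability over $G_N$ and the two iid replicas $\uX^{(1)},\uX^{(2)}$; writing $\prob\{\,\cdot\mid G_N\}$ for the conditional law of the pair, it suffices to establish \emph{overlap concentration}: $\E_{G_N}\!\big[\prob\{\Vert\nu-\onu\Vert_2>\delta\mid G_N\}\big]\to0$ for every fixed $\delta>0$.

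I would first condition on the number of edges of $G_N$ (concentrated on $\gamma N(1+o(1))$) and fix it to $m=\lfloor\gamma N\rfloor$, so as to work with a uniformly random graph on $N$ vertices with $m$ edges, exactly as in~\cite{AchlioptasNaor}; the Poisson ensemble is recovered at the end by averaging over $m$, all rates below being continuous in $\gamma$. Let $Z=Z(G_N)$ be the partition function of the $q$--coloring weight with softness $\epsilon$, let $\widehat Z_\nu$ be the contribution to $Z^{2}=\sum_\nu\widehat Z_\nu$ of ordered pairs of colorings of two--replica type $\nu$, so that $\prob\{\nu\mid G_N\}=\widehat Z_\nu/Z^{2}$, and set $\rho:=\lim_N N^{-1}\log\E Z$. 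A routine first--moment computation gives $\E\widehat Z_\nu=\exp\{N\Phi_\epsilon(\nu)+o(N)\}$ with $\Phi_\epsilon(\nu)=H(\nu)+\gamma\,g_\epsilon(\nu)$, where $H(\nu)=-\sum_{x,y}\nu(x,y)\log\nu(x,y)$ and $g_\epsilon$ is the explicit real--analytic log--probability that a uniformly chosen edge is satisfied in both replicas; in particular $\Phi_\epsilon(\onu)=2\rho$. The crux --- and the step I expect to be the \emph{main obstacle} --- is the variational statement that, for $\gamma\in[0,\gamma_q)\setminus\Gamma$ and every $\epsilon\in(0,1]$, $\onu$ is the \emph{unique} maximizer of $\Phi_\epsilon$ over the (compact) simplex, whence by compactness $\max\{\Phi_\epsilon(\nu):\Vert\nu-\onu\Vert_2>\delta\}\le 2\rho-\eta(\delta)$ for some $\eta(\delta)>0$. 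For $\epsilon=0$ this is precisely the second--moment analysis of Achlioptas and Naor~\cite{AchlioptasNaor}, who reduce to types with uniform marginals and then solve the resulting one--parameter optimization; $\gamma_q=(q-1)\log(q-1)$ is the threshold they identify below which $\onu$ is the global maximizer, and $\Gamma$ is the exceptional Lebesgue--null set on which their analysis does not yield a \emph{strict} maximum. The same analysis applies to every $\epsilon\in(0,1]$ with the hard weight replaced by the soft weight $\psi_\epsilon$ (a routine modification), and the resulting exceptional set can be taken independent of $\epsilon$.

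Granting the variational statement, overlap concentration follows by a standard large--deviations argument. Since $\epsilon>0$, switching one edge of $G_N$ changes $\log Z$ by at most $2|\log\epsilon|$, so Azuma--Hoeffding puts $\log Z$ within $O(\sqrt N)$ of $\E\log Z$ with probability $1-o(1)$. Since $\Phi_\epsilon(\onu)=2\rho$ is the global maximum, $\E Z^{2}\le e^{o(N)}(\E Z)^{2}$, so the Paley--Zygmund inequality gives $\prob\{Z\ge\tfrac12\E Z\}\ge e^{-o(N)}$; combined with the concentration of $\log Z$ this forces $\E\log Z\ge\log\E Z-o(N)$, hence $N^{-1}\log Z\to\rho$ in probability and $Z^{2}\ge e^{N(2\rho-\eta(\delta)/4)}$ with probability $1-o(1)$. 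On the other hand $\E\widehat Z_{\{\Vert\nu-\onu\Vert_2>\delta\}}\le e^{N(2\rho-\eta(\delta)+o(1))}$ by the isolated--maximum bound, so Markov's inequality gives $\widehat Z_{\{\Vert\nu-\onu\Vert_2>\delta\}}\le e^{N(2\rho-\eta(\delta)/2)}$ with probability $1-o(1)$. On the intersection of these high--probability events,
\[
\prob\{\Vert\nu-\onu\Vert_2>\delta\mid G_N\}\;=\;\frac{\widehat Z_{\{\Vert\nu-\onu\Vert_2>\delta\}}}{Z^{2}}\;\le\;e^{-N\eta(\delta)/4}\;\longrightarrow\;0 ,
\]
and taking expectation over $G_N$ (and then averaging over $m$) yields overlap concentration. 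Hence $\E\{\Vert\Delta\nu\Vert_2^{2}\}\to0$, condition~(\ref{eqn:graph<->tree}) holds for $\gamma\in[0,\gamma_q)\setminus\Gamma$, and Theorem~\ref{thm:graph<->tree} gives that reconstruction for $\{G_N\}$ is solvable if and only if it is tree--solvable.
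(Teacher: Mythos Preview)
Your overall plan—verify condition~(\ref{eqn:graph<->tree}) of Theorem~\ref{thm:graph<->tree} by showing that the two-replica type $\nu$ concentrates at $\onu$, via a first-moment upper bound on atypical types together with a high-probability lower bound on $Z$—is exactly the paper's. The gap is in the variational step, and with it your account of where $\Gamma$ comes from.

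You assert that for $\gamma<\gamma_q$ the uniform matrix $\onu$ is the unique global maximizer of $\Phi_\eps$ over the \emph{entire} simplex of two-replica types, attributing this to Achlioptas--Naor. But \cite{AchlioptasNaor} work with \emph{balanced} colorings: their ``reduction to uniform marginals'' is a restriction of the counting problem, not a proof that unbalanced types are suboptimal, and their optimization (Theorem~7 there) is carried out only over doubly-stochastic $\nu$. The paper does not assume the unrestricted global-max statement. It instead proves a separate balance lemma (Lemma~\ref{lemma:Balance}): differentiating $\alpha(\gamma)=N^{-1}\E\log Z$ in $\gamma$ and combining Jensen with the balanced lower bound $\E\log Z_{\rm b}$ of Lemma~\ref{lemma:ColoringPartFun} yields an integral inequality forcing $\sum_y\nu(x,y)\to 1/q$ in probability for Lebesgue-a.e.\ $\gamma\in[0,\gamma_q)$. \emph{That} a.e.\ conclusion is the origin of $\Gamma$; it has nothing to do with strictness in the Achlioptas--Naor optimization, which holds for all $\gamma<\gamma_q$ without exception. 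Only after the marginals are pinned does the paper invoke \cite{AchlioptasNaor} on the (nearly) doubly-stochastic slice via Lemma~\ref{lemma:UBProbCol}.

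A related issue: your Paley--Zygmund step already requires $\E Z^{2}\le e^{o(N)}(\E Z)^{2}$, i.e.\ precisely the unrestricted global-max claim you have not established, so the argument is not self-contained at that point either. The paper sidesteps this by lower-bounding $Z\ge Z_{\rm b}$ and running the second moment on $Z_{\rm b}$, where the types are automatically doubly-stochastic and \cite{AchlioptasNaor} applies directly. If you could actually prove that $\onu$ is the global maximizer of $\Phi_\eps$ over all types for every $\gamma<\gamma_q$, your Azuma/Paley--Zygmund route would in fact give the theorem with $\Gamma=\emptyset$, which is stronger than what the paper obtains; but that is exactly the ``sharper control of $\nu$'' the authors say they could not extract from \cite{AchlioptasNaor}.
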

We expect the result to hold down to $\eps=0$ (proper colorings),
with $\Gamma =\emptyset$, but did not prove it because of some 
technical difficulties
(indeed we need a sharper control of $\nu$ that guaranteed 
by \cite{AchlioptasNaor}, and our proof technique, cf. Lemma 
\ref{lemma:Balance}, relied on an average over 
$\gamma$).

The above theorems might suggests that graph 
and tree reconstruction do generally coincide. This expectation
is falsified by the simplest possible example:
the Ising model. This has been studied in depth for trees
\cite{BleherRuizZagrebnov,EvaKenPerSch,Asymmetric}. 
If the tree is regular with degree $(k+1)$, the problem is
solvable if and only if $k(1-2\eps)^2>1$. 
The situation changes dramatically for graphs.
\begin{thm}\label{thm:ferromagnet}
Reconstruction is solvable for random
regular Ising ferromagnets if and only if $k(1-2\eps)>1$.
\end{thm}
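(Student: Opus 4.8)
The plan is to establish the two bounds separately: show that $k(1-2\eps)>1$ implies graph-reconstructibility, and that $k(1-2\eps)\le 1$ implies non-reconstructibility for the random regular ferromagnet.

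For the \emph{positive} direction, I would exploit the ferromagnetic (monotone) structure via the global magnetization rather than trying to apply Theorem~\ref{thm:graph<->tree} — indeed the point of this theorem is that the hypothesis (\ref{eqn:graph<->tree}) \emph{fails} here, since in the ferromagnetic phase two independent replicas are biased towards alignment and $\nu$ does not concentrate on $\onu$. Concretely, for $k(1-2\eps)>1$ the random regular Ising ferromagnet on $N$ vertices has, with positive probability, a bulk magnetization $m_N=\frac1N\sum_i X_i$ that is bounded away from $0$ (this is the mean-field / Bethe-lattice ferromagnetic transition, and the relevant threshold for the \emph{graph} is $k(1-2\eps)=1$, matching the uniqueness threshold on the tree as noted in the discussion of \cite{MWW}). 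The key step is then to observe that conditioning on the far-away variables $\uX_{\cBall(r,t)}$ reveals (approximately) the sign of this global magnetization, because $\cBall(r,t)$ contains all but finitely many vertices; hence $\prob\{X_r=+\mid \uX_{\cBall(r,t)}\}$ is bounded away from $1/2$ on an event of positive probability, giving (\ref{eq:GraphReconstruction}). Quantitatively I would use the FKG inequality and a second-moment estimate on $m_N$ to show its distribution is (in the limit) a mixture of two symmetric masses at $\pm m^*(k,\eps)$ with $m^*>0$ iff $k(1-2\eps)>1$, then bound the total variation distance in the definition of solvability from below by a function of $m^*$.

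For the \emph{negative} direction, when $k(1-2\eps)\le 1$, I would prove non-reconstructibility by comparison with a dominating tree-like quantity. Using the local convergence of $G_N$ to the $(k+1)$-regular tree together with monotonicity, one can couple and bound the influence of the boundary $\uX_{\cBall(r,t)}$ on $X_r$ by the corresponding influence in the tree model \emph{with free boundary replaced by the worst-case all-$+$ boundary}; by the Ising tree analysis (\cite{BleherRuizZagrebnov,EvaKenPerSch}), the effect of an extremal boundary at depth $t$ decays iff one is below the uniqueness threshold $k(1-2\eps)=1$, not merely below the reconstruction threshold $k(1-2\eps)^2=1$. The crucial estimate is that for the ferromagnet the hardest boundary condition is the constant one, so controlling $\Delta(t)=\sup_{\ux}\Vert\prob_{r\mid\cBall(r,t)}\{\cdot\mid\ux_{\cBall(r,t)},G_N\}-\prob_r\{\cdot\mid G_N\}\Vert_{\sTV}$ — which, as recalled in the MCMC discussion, dominates the reconstruction quantity — reduces to a one-dimensional recursion for the tilt propagated inward along tree branches; this recursion contracts precisely when $k(1-2\eps)<1$, and the boundary case $k(1-2\eps)=1$ is handled by a separate $1/\mathrm{poly}(t)$-decay argument plus the wpp caveat.

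The main obstacle I anticipate is the positive direction's claim that conditioning on $\cBall(r,t)$ actually pins the magnetization sign with positive probability \emph{uniformly in $t$}: one must rule out that, as $t$ grows, the finitely many removed vertices near $r$ carry enough residual ambiguity to wash out the bias, and one must handle the fact that in the configuration model $r$ may lie in an atypical local structure. I would address this by first conditioning on a ``good'' local neighborhood event (the tree-like ball of radius $t$ around $r$), then showing that on the complement graph $\cBall(r,t)$ the magnetization still concentrates on $\pm m^*$ with the conditional law of $X_r$ tracking the revealed sign up to an error $o_t(1)+o_N(1)$; the ferromagnetic GKS/FKG correlation inequalities make this monotone comparison clean. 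The boundary value $k(1-2\eps)=1$ itself (where $m^*=0$) is the delicate borderline and is why the statement is an ``if and only if'' with a strict inequality on the solvable side.
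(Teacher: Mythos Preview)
Your approach is essentially the same as the paper's. The negative direction matches almost exactly: the paper bounds the reconstruction quantity by the worst-case conditional at the boundary, uses Griffiths inequalities to identify the all-$+$ boundary as extremal, invokes the whp tree structure of $\Ball(r,t)$, and then shows the one-dimensional recursion $f_{\theta,0}^{\circ t}$ contracts to $0$ precisely when $k\theta\le 1$ (Lemma~\ref{lemma:fproperties}).

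For the positive direction the paper also argues via the global magnetization $\oX=N^{-1}\sum_i X_i$, but packages it as a contrapositive rather than your direct ``conditioning reveals the sign'' argument: non-reconstructibility forces $\E\{X_rX_j\mid G_N\}\le\delta$ whp for independent random $r,j$, hence $\E\{\oX^2\}\to 0$. The contradiction is then obtained by showing $|\oX|>\delta$ whp when $k\theta>1$, not via FKG/second-moment on $m_N$ but via a partition-function comparison: a first-moment bound gives $\E\,\Zh_{\theta,M}\le C\,2^N e^{DNM^2}$ for the magnetization-constrained partition function (Lemma~\ref{lmm:Zmagnetized}), while the full partition function satisfies $Z_{\theta,0}\ge e^{N[\varphi(\theta,0)-\xi]}$ whp for the explicit Bethe free energy $\varphi$ (Lemma~\ref{lmm:Ztotal}); the crux is verifying $\varphi(\theta,0)>\log 2$ when $k\theta>1$ by analyzing the fixed points of $f_{\theta,0}$. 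This sidesteps your anticipated obstacle about pinning the sign uniformly in $t$, and does not require identifying the limiting law of $m_N$ as a two-point mixture --- only that $|\oX|$ is bounded away from zero whp. Your route is viable but heavier than what is actually needed.
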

This result possibly generalizes to
Ising ferromagnets  on other graphs that converge locally to trees.
The proof of reconstructibility for $k(1-2\eps)>1$ essentially
amounts to finding a bottleneck in Glauber dynamics.
As a consequence it immediately implies that the mixing time is exponential
in this regime. We expect this to be a tight estimate of the threshold
for fast mixing.

On the other hand, for an Ising spin-glass,
the tree and graph thresholds do coincide.
In fact, for an Ising model on a Galton-Watson tree with 
Poisson$(2\gamma)$ offspring distribution, reconstruction is solvable if
and only if $2\gamma(1-2\eps)^2>1$ \cite{EvaKenPerSch}. 
The corresponding graph result is established below. 
\begin{thm}\label{thm:spinglass}
Reconstruction is solvable for Poisson
Ising spin-glasses if $2\gamma(1-2\eps)^2>1$,
and it is unsolvable if $2\gamma(1-2\eps)^2<1$.
\end{thm}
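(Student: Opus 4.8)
The plan is to treat the two regimes separately: Theorem~\ref{thm:graph<->tree} handles $2\gamma(1-2\eps)^2<1$, while for $2\gamma(1-2\eps)^2>1$ a direct second-moment argument is needed. \emph{Unsolvability for $2\gamma(1-2\eps)^2<1$.} The first step is to verify the hypothesis~(\ref{eqn:graph<->tree}) of Theorem~\ref{thm:graph<->tree}. For $\cX=\{+1,-1\}$ we have $\onu\equiv 1/4$, and the bracket in~(\ref{eqn:graph<->tree}) equals $-\Delta\nu(+,-)$ for $x=+1$ (and $-\Delta\nu(-,+)$ for $x=-1$); writing $m^{(a)}=\frac1N\sum_iX^{(a)}_i$ and $R=\frac1N\sum_iX^{(1)}_iX^{(2)}_i$ one checks $\Delta\nu(+,-)=\tfrac14(m^{(1)}-m^{(2)}-R)$. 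Now invoke the gauge (Nishimori) symmetry of the $\pm$ spin glass: the substitution $X_i\mapsto\tau_iX_i$, $J_{ij}\mapsto\tau_i\tau_jJ_{ij}$ leaves the joint law of $(G_N,\uX^{(1)},\uX^{(2)})$ invariant for every $\tau\in\{\pm1\}^N$, since the couplings are i.i.d.\ symmetric; averaging over $\tau$ gives $\E[(m^{(a)})^2]=1/N$. Hence~(\ref{eqn:graph<->tree}) reduces to $\E[R^2]\to0$, i.e.\ to vanishing of the overlap fluctuations, which is precisely the content of the high-temperature interpolation bound of Guerra and Toninelli~\cite{Guerra}, valid for $2\gamma(1-2\eps)^2<1$. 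Theorem~\ref{thm:graph<->tree} then makes graph and tree reconstruction equivalent, and on a tree the $\pm$ spin glass is gauge-equivalent to the ferromagnet with the same $\eps$, whose reconstruction on the Poisson$(2\gamma)$ Galton--Watson tree is unsolvable below $2\gamma(1-2\eps)^2=1$ by~\cite{EvaKenPerSch}; this gives the ``only if'' direction.

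\textbf{Solvability for $2\gamma(1-2\eps)^2>1$.} In this regime $\E[R^2]=\Theta(1)$, so~(\ref{eqn:graph<->tree}) fails and Theorem~\ref{thm:graph<->tree} is unavailable. Since $\cX=\{+1,-1\}$ we have $\Vert\prob_{r,\cBall(r,t)}-\prob_r\prob_{\cBall(r,t)}\Vert_{\sTV}=\tfrac12\E|\eta_t|\ge\tfrac12\E[\eta_t^2]$ with $\eta_t\equiv\E[X_r\,|\,\uX_{\cBall(r,t)},G_N]$; since $\eta_t^2\le1$, it suffices to show $\liminf_t\liminf_N\E[\eta_t^2]>0$, the ``positive probability'' conclusion then following from a reverse Markov inequality with a uniform $\ve$. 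I would lower-bound $\E[\eta_t^2]\ge\mathrm{Cov}(X_r,\widehat X)^2/\mathrm{Var}(\widehat X)$ by testing against the \emph{gauge-corrected} linear estimator $\widehat X=\sum_{j:\,d(r,j)=t}\sigma_{rj}X_j$, with $\sigma_{rj}=\prod_{e\in\mathrm{path}(r,j)}\sign(J_e)$ (well-defined on the event that $\Ball(r,t)$ is a tree, of probability $\to1$ for each fixed $t$, and set to $0$ otherwise); note that the plain magnetization $\sum_jX_j$ has zero covariance with $X_r$ by gauge symmetry, which is exactly why a non-linear estimator is forced here. As $N\to\infty$ with $t$ fixed, $\E[X_r\widehat X]\to(2\gamma(1-2\eps))^t$, while the standard Kesten--Stigum book-keeping over the level $\ell$ of the most recent common ancestor of $j,j'$ (using $\sigma_{rj}\sigma_{rj'}\langle X_jX_{j'}\rangle_{\Tree}=(1-2\eps)^{2(t-\ell)}$ and the second moments of Galton--Watson generation sizes) gives $\E[\widehat X^2]=\Theta\!\big((2\gamma)^t\big(1+\sum_{k=1}^t(2\gamma(1-2\eps)^2)^k\big)\big)$. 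When $2\gamma(1-2\eps)^2>1$ the geometric sum is $\Theta((2\gamma(1-2\eps)^2)^t)$, so $\mathrm{Cov}(X_r,\widehat X)^2/\mathrm{Var}(\widehat X)$ is bounded below by a positive constant uniformly in $t$.

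\textbf{Main obstacle.} The delicate point is the $N\to\infty$ limit inside $\E[X_r\widehat X]$ and $\E[\widehat X^2]$: these involve the correlations $\langle X_rX_j\rangle_{G_N}$ and $\langle X_jX_{j'}\rangle_{G_N}$, which are \emph{not} local functions of $G_N$, and since $2\gamma(1-2\eps)^2>1$ forces $2\gamma(1-2\eps)>1$ these correlations do not decay summably, so they cannot be matched with the tree values pathwise. The strategy is to take $N\to\infty$ before $t\to\infty$: for fixed $t$ the relevant $2t$-neighborhoods of $r$ are trees with probability $\to1$, and the non-tree configurations contribute at most a vanishing probability times $|\langle\,\cdot\,\rangle|\le1$, so that (using the gauge-correction, which makes the surviving tree contributions the aligned ones $\sigma_{rj}\sigma_{rj'}\langle X_jX_{j'}\rangle_\Tree\ge0$) everything reduces to the Galton--Watson computation above. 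Making this comparison rigorous --- essentially, showing that the ``truncated spin-glass susceptibility'' $\sum_{j,j':\,d(r,j)=d(r,j')=t}\E_{G_N}[\sigma_{rj}\sigma_{rj'}\langle X_jX_{j'}\rangle_{G_N}]$ grows in $t$ on $G_N$ exactly as it does on the tree --- is the crux of the argument.
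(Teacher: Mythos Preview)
Your treatment of the unsolvable regime $2\gamma(1-2\eps)^2<1$ matches the paper's (your gauge argument for $\E[(m^{(a)})^2]=1/N$ is in fact more careful than the paper's one-line claim that the bracket equals $(q_{12}/4)^2$).

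For the solvable regime, however, your direct Kesten--Stigum estimator approach has the gap you yourself flag: the correlations $\langle X_jX_{j'}\rangle_{G_N}$ entering $\E[\widehat X^2]$ are non-local, and you give no mechanism to control them by their tree values when $2\gamma(1-2\eps)^2>1$. You also assert without proof that $\E[R^2]=\Theta(1)$ in this regime and conclude that Theorem~\ref{thm:graph<->tree} is ``unavailable''---but this is the wrong way to look at it, since if Theorem~\ref{thm:graph<->tree} \emph{were} available it would immediately give what you want (tree reconstruction is solvable here).

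The paper sidesteps your obstacle entirely with a dichotomy. Either $\E[q_{12}^2]\to 0$, in which case condition~(\ref{eqn:graph<->tree}) holds, Theorem~\ref{thm:graph<->tree} applies, and tree solvability (known from~\cite{EvaKenPerSch}) transfers to the graph; or $\E[q_{12}^2]\not\to 0$, in which case reconstruction is solvable for a soft reason having nothing to do with trees. Indeed, writing $\E[q_{12}^2]=\E_{r,j}\big[\E[X_rX_j\,|\,G_N]^2\big]$ as an average over two uniform vertices, and noting that for $j\notin\Ball(r,t)$ one has $|\E[X_rX_j\,|\,G_N]|\le \|\prob_{r,\cBall(r,t)}-\prob_r\prob_{\cBall(r,t)}\|_{\sTV}$, non-reconstructibility would force $\E[q_{12}^2]\to 0$. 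Contrapositively, $\E[q_{12}^2]\not\to 0$ implies solvability. Either branch yields the conclusion, and neither requires comparing graph correlations to tree correlations in the supercritical phase.
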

%
%

\vspace{-0.5cm}

\Section{Random graph preliminaries}

Let us start with a few more notations.
Given $i\in V$, and $t\in\naturals$,
$\Ball(i,t)$ is the set of vertices $j$ such that $d(i,j)\le t$
(as well as the subgraph formed by those vertices and by edges that
are not in $\cBall(i,t)$). Further we introduce the set of vertices
$\Edge(i,t) \equiv \Ball(i,t)\cap\cBall(i,t)$.

The proof of Theorem \ref{thm:graph<->tree} relies on two remarkable properties
of Poisson graphical models: the local convergence 
of $\Ball(r,t)$ to the corresponding Galton-Watson tree of depth $t$
(whose straightforward proof we omit), 
and a form of independence of $\cBall(r,t)$ relatively to $\Ball(r,t)$.
Notice that, because of the symmetry of the graph distribution under 
permutation of the vertices, we can fix $r$ to be a deterministic vertex (say, $r=1$).
\begin{propo}\label{prop:tree_convergence}
Let $\Ball(r,t)$ 
be the depth-$t$ neighborhood of
the root in a Poisson random graph $G_N$, and $\Tree_t$
a Galton-Watson tree of depth $t$ and 
offspring distribution {\rm Poisson}$(2\gamma)$.
Given any (labeled) tree $\Tree_*$,
we write $\Ball(r,t) \simeq \Tree_*$ if $\Tree_*$ is obtained 
by the depth-first relabeling of $\Ball(r,t)$ 
following a pre-established 
convention\footnote{For instance one might agree to preserve
the original lexicographic order among siblings}.
Then 
$\prob\{\Ball(r,t)\simeq \Tree_*\}$ converges to
$\prob\{\Tree_t\simeq \Tree_*\}$ as $N\to\infty$.
\end{propo}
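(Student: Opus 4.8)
The plan is to couple the breadth-first exploration of $\Ball(r,t)$ in $G_N$ with the construction of the Galton--Watson tree $\Tree_t$, and to show that the two processes agree with probability tending to $1$ whenever the limiting tree $\Tree_*$ is finite (which it is, almost surely, for fixed $t$). First I would recall the exploration process: starting from $r$, one reveals the edges incident to $r$, then to each newly discovered vertex, layer by layer, up to depth $t$. In the Poisson ensemble, the number of edges between a fixed vertex $i$ and each of the other $N-1$ vertices is an independent $\Poisson(2\gamma/N)$ variable, and each loop has $\Poisson(\gamma/N)$ multiplicity. Hence, conditional on the set of vertices already explored, the number of \emph{new} neighbors of a given active vertex $i$ is $\Poisson(2\gamma(N-|\text{explored}|)/N)$ plus a contribution from multi-edges and edges back into the explored set.

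The key steps, in order, are: (i) show that in the first $t$ layers only $O_P(1)$ vertices are discovered (since $t$ is fixed and the offspring law has finite mean $2\gamma$, the expected size of $\Ball(r,t)$ is at most $\sum_{s=0}^{t}(2\gamma)^s$, a constant), so with probability $1-O(1/N)$ the number of explored vertices stays $o(N)$; (ii) conditional on this, the $\Poisson(2\gamma(N-|\text{explored}|)/N)$ offspring count is within total variation distance $O(|\text{explored}|/N)=o(1)$ of $\Poisson(2\gamma)$, by the standard bound $\Vert \Poisson(\lambda)-\Poisson(\mu)\Vert_{\sTV}\le|\lambda-\mu|$; (iii) show that the probability of ever creating a cycle within depth $t$ — i.e. an edge from an active vertex to an already-discovered vertex, a multi-edge, or a loop — is $O(1/N)$, again because there are only $O_P(1)$ vertices and each forbidden edge has probability $O(1/N)$. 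Combining (i)--(iii), the explored neighborhood is, with probability $1-o(1)$, a tree whose offspring counts are jointly within $o(1)$ total variation of i.i.d.\ $\Poisson(2\gamma)$, i.e.\ of $\Tree_t$. Finally I would pass to isomorphism classes: the depth-first relabeling convention is a deterministic function of the explored rooted tree, so $\prob\{\Ball(r,t)\simeq\Tree_*\}\to\prob\{\Tree_t\simeq\Tree_*\}$ for every fixed finite labeled tree $\Tree_*$, and for the (finitely many, up to the relabeling) trees of bounded depth this is all that is required.

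The main obstacle is the bookkeeping in step (ii)--(iii): one must handle the fact that the exploration conditions on the \emph{absence} of edges already inspected, so the relevant offspring variable is not literally $\Poisson(2\gamma)$ but a Poisson with a slightly smaller rate and with a small correction for back-edges and multiplicities. Making the coupling precise — constructing on a common probability space a version of the exploration and a version of $\Tree_t$ that coincide on the event that no cycle appears and no offspring count differs — and then bounding the failure probability by a union bound over the $O_P(1)$ explored vertices, is where the real work lies; but since $t$ is fixed and every bad event costs a factor $1/N$ while only a bounded number of them can occur, the total error is $O(1/N)\to 0$. Everything else is routine, and the statement follows.
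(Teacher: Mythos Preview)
Your proposal is correct and follows the standard breadth-first exploration coupling argument for local weak convergence of sparse random graphs to Galton--Watson trees. The paper itself does not give a proof of this proposition: it explicitly states that the ``straightforward proof'' is omitted, so there is nothing to compare against beyond noting that your outline is exactly the kind of argument the authors had in mind (and which is implicit in the reference they cite for local weak convergence). One small refinement: in step~(iii), to turn the $O_P(1)$ bound on the number of explored vertices into an honest $o(1)$ bound on the probability of a cycle, you should first fix a large constant $K$, use Markov's inequality on $\E|\Ball(r,t)|\le\sum_{s\le t}(2\gamma)^s$ to control $\prob\{|\Ball(r,t)|>K\}$, and then on the complementary event bound the back-edge probability by $O(K^2/N)$; letting $K\to\infty$ after $N\to\infty$ (or choosing $K=K(N)\to\infty$ slowly) closes the argument cleanly.
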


\begin{propo}\label{prop:BoundNeighborhood}
Let $\Ball(r,t)$ 
be the depth-$t$ neighborhood of
the root in a Poisson random graph $G_N$. Then, for any $\lambda>0$ 
there exists $C(\lambda,t)$ such that, for any $N$, $M\ge 0$
\begin{eqnarray}
\prob\{|\Ball(r,t)|\ge M\}\le C(\lambda,t)\, \lambda^{-M}\, .
\end{eqnarray}
\end{propo}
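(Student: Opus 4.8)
The plan is to control the growth of $|\Ball(r,t)|$ by a domination argument, comparing the exploration of the depth-$t$ neighborhood in $G_N$ with a branching process whose offspring law is (slightly more than) Poisson$(2\gamma)$, and then applying a Chernoff-type bound on the total progeny. The subtlety is that we want a bound uniform in $N$, and the simplest tail bound (like $\exp(-cM)$) is not enough: the statement requires the \emph{sub-exponential} tail $C(\lambda,t)\lambda^{-M}$ for an arbitrarily large prescribed $\lambda$. This forces me to exploit the depth cutoff $t$: a Galton-Watson tree truncated at generation $t$ can have an enormous total size with non-negligible probability only if \emph{some} generation is large, and the generating function of such a truncated tree is an entire function, so its tail decays faster than any geometric.

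Concretely, first I would set up the breadth-first exploration: starting from $r=1$, reveal its neighbors, then their neighbors, and so on for $t$ rounds. At each step, a vertex not yet seen has a number of new neighbors that is stochastically dominated by a Poisson random variable of mean $2\gamma$ (in fact, in the Poisson multigraph model each pair $(i,j)$ carries an independent Poisson$(2\gamma/N)$ number of edges, and loops a Poisson$(\gamma/N)$; summing over at most $N$ candidate endpoints gives something dominated by Poisson$(2\gamma + \gamma/N) \le \mathrm{Poisson}(3\gamma)$ for all $N$). Hence $|\Ball(r,t)|$ is stochastically dominated by the total size $Z_{\le t} = 1 + Z_1 + \dots + Z_t$ of the first $t$ generations of a Galton-Watson tree $\Tree$ with Poisson$(3\gamma)$ offspring; this reduces the claim to the deterministic statement
\begin{eqnarray}
\prob\{Z_{\le t}\ge M\}\le C(\lambda,t)\,\lambda^{-M}\,
\end{eqnarray}
for every $\lambda>0$.

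Next I would prove this tail bound by a moment generating function argument. Let $g(s)=\E[s^{Z}]=\exp(3\gamma(s-1))$ be the offspring generating function (entire in $s$), and let $F_t(s)=\E[s^{Z_{\le t}}]$. Conditioning on the root's children and using independence of the subtrees, one gets the recursion $F_{t}(s) = s\,g(F_{t-1}(s))$, with $F_0(s)=s$. Since $g$ is entire and $F_0$ is a polynomial, by induction each $F_t$ is an entire function of $s$; in particular $F_t(\lambda)<\infty$ for every $\lambda>0$ and every fixed $t$. Then Markov's inequality gives, for any $\lambda>1$,
\begin{eqnarray}
\prob\{Z_{\le t}\ge M\} = \prob\{\lambda^{Z_{\le t}}\ge \lambda^{M}\}\le \lambda^{-M}\,\E[\lambda^{Z_{\le t}}] = F_t(\lambda)\,\lambda^{-M}\, ,
\end{eqnarray}
so we may take $C(\lambda,t)=F_t(\lambda)$ (and for $\lambda\le 1$ the bound is trivial with $C=1$ since probabilities are at most $1$ and $\lambda^{-M}\ge 1$). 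Composing with the stochastic domination from the first step yields the proposition with the same $C(\lambda,t)$, adjusted for the $3\gamma$ versus $2\gamma$ discrepancy — which only changes the constant, not the form of the bound.

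The main obstacle, and the place that needs care, is the stochastic domination step: one must check that the breadth-first exploration on the Poisson multigraph really is dominated generation-by-generation by the Galton-Watson process, accounting for the fact that previously-explored vertices reduce the pool of available endpoints (which only helps) and that multi-edges and loops are present (which is why I inflate the mean to $3\gamma$, or more cleanly $2\gamma+o(1)$, absorbing the loop term). A clean way to make this rigorous is to couple the two processes on a common probability space, revealing for each active vertex an independent Poisson$(2\gamma+\gamma/N)$ batch that upper-bounds its true new-neighbor count; the truncation at depth $t$ and the entire-ness of $F_t$ then do the rest. Everything after the domination is a routine generating-function computation.
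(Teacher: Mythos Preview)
Your proposal is correct and follows essentially the same route as the paper: breadth-first exploration, stochastic domination by a Galton-Watson process, and a Chernoff/Markov bound via the recursion $F_t(s)=s\,g(F_{t-1}(s))$ for the generating function of the truncated total progeny, using that $F_t$ is finite for all $s$. The only cosmetic difference is that the paper dominates the new-neighbor count by $\mathrm{Binom}(N,2\gamma/N)$ and then bounds its generating function $(1+2\gamma(s-1)/N)^N$ by the Poisson one $e^{2\gamma(s-1)}$ to get uniformity in $N$, whereas you inflate the Poisson mean to $3\gamma$ up front; both yield the same conclusion.
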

\begin{proof}
Imagine to explore $\Ball(r,t)$ in breadth-first fashion.
For each $t$, $|\Ball(r,t+1)|-|\Ball(r,t)|$ is upper bounded by the sum of 
$|\Edge(r,t)|$ iid binomial random variables with parameters
$N-|\Ball(r,t)|$ and $1-e^{-2\gamma/N}\le 2\gamma/N$ 
(the number of neighbors of
each node in $\Edge(r,t)$). Therefore $|\Ball(r,t)|$ is stochastically
dominated by $\sum_{s=0}^tZ_N(s)$, where $\{Z_N(t)\}$
is a Galton-Watson process with offspring distribution 
Binom$(N,2\gamma/N)$. By Markov inequality
$\prob\{|\Ball(r,t)|\ge M\}\le \E\{\lambda^{\sum_{s=0}^t
Z_N(s)}\}\, \lambda^{-M}$.
By elementary branching processes theory 
$ g^N_{t}(\lambda)\equiv\E\{\lambda^{\sum_{s=0}^t Z_N(s)}\}$
satisfies the recursion $g^N_{t+1}(\lambda) = \lambda\xi_N(g^N_t(\lambda))$,
$g^N_0(\lambda) = \lambda$, with $\xi_N(\lambda) = \lambda(1+
2\gamma(\lambda-1)/N)^N$. The thesis follows by 
$g^N_t(\lambda)\le  g_t(\lambda)$, the latter being obtained by replacing
$\xi_N(\lambda)$ with $\xi(\lambda) = e^{2\gamma(\lambda-1)}\ge 
\xi_N(\lambda)$.
\end{proof}
\begin{propo}\label{prop:poisson_independance}
Let $G_N$ be a Poisson random graph on vertex set $[N]$ and 
edge probability $p=2\gamma/N$. Then, conditional on $\Ball(r,t)$,
$\cBall(r,t)$ is a Poisson random graph on vertex set 
$[N]\setminus \Ball(r,t-1)$ and same edge probability.
\end{propo}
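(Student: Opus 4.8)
The plan is to read the statement off from the explicit product structure of the Poisson multigraph. For $i\neq j$ in $[N]$ let $E_{ij}$ be the number of edges between $i$ and $j$, and $E_{ii}$ the loop multiplicity at $i$; by construction these are mutually independent, with $E_{ij}\sim\Poisson(2\gamma/N)$ and $E_{ii}\sim\Poisson(\gamma/N)$. I would exhibit $\Ball(r,t)$ as a measurable function of the sub-family $\mathcal E^-=\{E_{ij}:\{i,j\}\cap\Ball(r,t-1)\neq\emptyset\}$ of multiplicities touching $\Ball(r,t-1)$, and then observe that the complementary family $\mathcal E^+=\{E_{ij}:i,j\in[N]\setminus\Ball(r,t-1)\}$ is exactly the set of multiplicities of the induced subgraph on $\cBall(r,t)=[N]\setminus\Ball(r,t-1)$.

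First I would verify the claim ``$\Ball(r,t)$ is generated by $\mathcal E^-$'' by a breadth-first exploration: revealing all edges incident to $\Edge(r,s)$ identifies $\Edge(r,s+1)$, so revealing $\mathcal E^-$ layer by layer for $s=0,\dots,t-1$ produces the vertex sets $\Edge(r,0),\dots,\Edge(r,t)$ together with every edge having an endpoint at distance $\le t-1$ from $r$ --- which, by the definition of the subgraph $\Ball(r,t)$, is precisely $\Ball(r,t)$. Conversely, the labelled subgraph $\Ball(r,t)$ determines graph distances from $r$ (a geodesic of $G_N$ of length $\le t$ stays inside it), hence determines $\Ball(r,t-1)$ and all of $\mathcal E^-$. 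The point I would state with care is the complementary fact: edges with \emph{both} endpoints in $\cBall(r,t)$ --- in particular edges between two vertices at distance exactly $t$, and edges from such vertices toward the rest of the graph --- are neither recorded in $\Ball(r,t)$ nor contained in $\mathcal E^-$; together with the corresponding loops they make up $\mathcal E^+$.

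To avoid the apparent circularity of conditioning on the random set $\Ball(r,t-1)$, I would fix a feasible value: for each labelled graph $H$ that can arise as $\Ball(r,t)$, writing $A\ni r$ for its set of vertices at distance $\le t-1$ from $r$, the event $\{\Ball(r,t)=H\}$ lies in $\sigma(\{E_{ij}:\{i,j\}\cap A\neq\emptyset\})$, which is independent of $\sigma(\{E_{ij}:i,j\in[N]\setminus A\})$ because the two index sets are disjoint and the $E_{ij}$ are independent. Hence conditioning on $\{\Ball(r,t)=H\}$ does not change the joint law of $\{E_{ij}:i,j\in[N]\setminus A\}$: it remains a family of independent $\Poisson(2\gamma/N)$ variables (resp.\ $\Poisson(\gamma/N)$ on the diagonal), i.e.\ a Poisson random graph on $[N]\setminus A=\cBall(r,t)$ with the same edge probability. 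Averaging over $A$ and $H$ gives the statement. I do not anticipate a genuine difficulty here; the only thing needing attention is the bookkeeping above --- that ``conditioning on $\Ball(r,t)$'' reveals exactly the edges incident to $\Ball(r,t-1)$ and says nothing about the edges internal to $\cBall(r,t)$ --- since this is precisely the partition of the independent Poisson variables on which the argument rests.
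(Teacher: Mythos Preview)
Your proof is correct and follows essentially the same approach as the paper's: both argue that conditioning on $\Ball(r,t)=H$ fixes only the edge variables with at least one endpoint in $\Ball(r,t-1)$, which is disjoint from (and hence independent of) the family of edges internal to $\cBall(r,t)$. Your treatment is in fact more careful than the paper's terse version---you make explicit the BFS measurability, the fact that $\Ball(r,t)$ determines $\Ball(r,t-1)$, and you resolve the apparent circularity by fixing $H$ first---but the underlying idea is identical.
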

\begin{proof}
Condition on $\Ball(r,t) = \Graph(t)$, and let $\Graph(t-1)=\Ball(r,t-1)$
(notice that this is uniquely determined from $\Graph(t)$).
This is equivalent to conditioning on a given edge realization for
any two vertices $k$, $l$ such that $k\in \Graph(t-1)$ and $l\in \Graph(t)$
(or viceversa).

On the other hand, $\cBall(r,t)$ is the graph with vertices set
$[N]\setminus\Graph(t)$ and edge set $(k,l)\in G_N$ such that
$k,l\not\in \Graph(t-1)$. Since this set of vertices couples is disjoint
from the one we are conditioning upon, and by independence of
edges in $G_N$,  the claim follows.
\end{proof}
%

%

\vspace{-0.4cm}

\Section{Proof of Theorem \ref{thm:graph<->tree}}

We start from a simple technical result.
\begin{lemma}\label{lemma:Simple}
Let $p$, $q$ be probability distribution over a finite set $\cS$,
and denote by $q_0(x) = 1/|\cS|$ the uniform distribution over the same set.
Define $\hp(x) \equiv p(x)q(x)/z$, where $z\equiv \sum_x p(x)q(x)$.
Then 
$||\hp-p||_{\sTV} \le 3|\cS|^2\, ||q-q_0||_{\sTV}$.
\end{lemma}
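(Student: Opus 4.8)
The plan is to control $\|\hp - p\|_{\sTV}$ by writing $\hp(x) - p(x) = p(x)\big(q(x)/z - 1\big)$ and estimating both the numerator deviation $q(x) - q_0(x)$ and the normalization $z$. First I would record the elementary bounds that follow from $\|q - q_0\|_{\sTV} = \tfrac12\sum_x|q(x) - q_0(x)| \equiv \delta$: namely $|q(x) - 1/|\cS|| \le 2\delta$ for every $x$, hence $q(x) \le 1/|\cS| + 2\delta$. Then $z = \sum_x p(x)q(x) \le \max_x q(x) \le 1/|\cS| + 2\delta$, and similarly $z \ge 1/|\cS| - 2\delta$; in particular, for $\delta$ small, $z$ is comparable to $1/|\cS|$. (If $\delta$ is not small, say $\delta \ge 1/(3|\cS|^2)$, the claimed inequality is trivial since the left side is at most $1$, so one may assume $\delta$ small throughout.)

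Next I would estimate $\|\hp - p\|_{\sTV} = \tfrac12\sum_x p(x)\,\big|q(x)/z - 1\big| = \tfrac{1}{2z}\sum_x p(x)\,|q(x) - z|$. The key step is to bound $|q(x) - z|$ pointwise. Since $z = \sum_y p(y)q(y)$ is an average of the values $q(y)$, and each $q(y)$ lies within $2\delta$ of $1/|\cS|$, we get $|q(x) - z| \le |q(x) - 1/|\cS|| + |1/|\cS| - z| \le 2\delta + 2\delta = 4\delta$. Plugging in, $\|\hp - p\|_{\sTV} \le \tfrac{1}{2z}\cdot 4\delta \cdot \sum_x p(x) = \tfrac{2\delta}{z}$. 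Using $z \ge 1/|\cS| - 2\delta \ge 1/(2|\cS|)$ for $\delta \le 1/(4|\cS|)$ gives $\|\hp - p\|_{\sTV} \le 4|\cS|\,\delta$, which is even stronger than the stated bound $3|\cS|^2\delta$; in the complementary regime $\delta > 1/(4|\cS|)$ one has $3|\cS|^2\delta > 3|\cS|/4$, and since $\|\hp-p\|_{\sTV}\le 1$ the bound holds as long as $|\cS|\ge 2$ (the case $|\cS|=1$ being vacuous). So the constant and the power of $|\cS|$ in the statement are deliberately generous, leaving room for the crude casework.

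I do not expect a genuine obstacle here; the only mild subtlety is organizing the two regimes (small versus not-small $\delta$) so that no division by a possibly-tiny $z$ occurs, and making sure the threshold separating the regimes is chosen consistently (e.g. $\delta \le 1/(4|\cS|)$ for the main estimate). An alternative, slightly slicker route avoiding explicit casework: bound $|\hp(x) - p(x)| \le |p(x)q(x) - p(x)z|/z$ and note $|q(x) - z| \le \sum_y p(y)|q(x) - q(y)| \le \sum_y p(y)\big(|q(x)-q_0(x)| + |q(y)-q_0(y)|\big) \le 2\|q-q_0\|_1 = 4\delta$ directly, then handle $1/z$ by the same lower bound. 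Either way the computation is routine; I would present the first version for transparency.
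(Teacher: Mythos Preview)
Your proposal is correct and follows essentially the same strategy as the paper: both split into the trivial regime where $\|q-q_0\|_{\sTV}$ is large (so the bound holds because the left side is at most $1$) and the main regime where $z$ is bounded below by a constant times $1/|\cS|$, then control $\hp-p$ by an elementary triangle-inequality estimate. The only difference is in the algebraic decomposition: the paper writes $p(x)=p(x)q_0(x)/z_0$ and splits $\hp-p$ into a ``normalization'' term governed by $|z^{-1}-z_0^{-1}|$ plus a ``pointwise'' term governed by $\sum_x p(x)|q(x)-q_0(x)|$, whereas you keep $\hp(x)-p(x)=p(x)(q(x)-z)/z$ in one piece and bound $|q(x)-z|\le 4\delta$ directly. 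Your route is slightly more economical and in fact yields the sharper bound $4|\cS|\,\|q-q_0\|_{\sTV}$; the paper's stated constant $3|\cS|^2$ is, as you observed, deliberately loose.
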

\begin{proof} 
Since $||\hp-p||_{\sTV}\le 1$ we can assume, without loss of generality,
that $ ||q-q_0||_{\sTV}\le (2|\cS|)^{-1}$. If we write $p(x) = p(x)q_0(x)/z_0$,
with $z_0=1/|\cS|$, then $|z-z_0|\le |\sum_x[p(x)q(x)-p(x)q_0(x)]|
\le ||q-q_0||_{\sTV}$ and in particular $z\ge z_0/2$. From triangular 
inequality we have on the other hand
\begin{align*}
||\hp-p||_{\sTV} \le \frac{1}{2}\left|z^{-1}-z_0^{-1}\right|
+\frac{1}{2z_0}\sum_x p(x)|q(x)-q_0(x)|\, .
\end{align*}
Using $|x^{-1}-y^{-1}|\le |x-y|/\min(x,y)^2$, the first term is bounded by 
$2|z-z_0|/z_0^2\le 2|\cS|^2||q-q_0||_{\sTV}$. The second is at most
$|\cS|\, ||q-q_0||_{\sTV}$ which proves the thesis.
\end{proof}

In order to prove Theorem \ref{thm:graph<->tree} we 
first establish that, under the condition (\ref{eqn:graph<->tree}),
any (fixed) subset of the variables $\{X_1,\dots,X_N\}$
is (approximately) uniformly distributed.
\begin{propo}\label{degenerescence}
Let $i(1),\dots,i(k)\subseteq [N]$ be any (fixed)
set of vertices, and $\xi_1,\dots, \xi_k\in\cX$.  
Then, under the hypotheses of Theorem 
\ref{thm:graph<->tree}, for any $\ve>0$
\begin{eqnarray}
\left|\prob_{i(1),\dots,i(k)}\{\xi_1,\dots, \xi_k|G_N\} - 
\frac{1}{|\cX|^k} \right|\le \ve\, ,\label{eqn:degenerescence}
\end{eqnarray}
with high probability.
\end{propo}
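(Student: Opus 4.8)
The plan is to deduce the approximate uniformity of any fixed marginal from the second-moment-type hypothesis (\ref{eqn:graph<->tree}) by a replica / annealed-computation argument. The starting observation is that for two i.i.d.\ replicas $\uX^{(1)}, \uX^{(2)}$ drawn from $\prob\{\,\cdot\,|G_N\}$, the empirical two-replica type $\nu(x,y)$ has, by definition, $\E[\nu(x,y)\,|\,G_N] = \sum_j \frac{1}{N}\prob_j\{x|G_N\}\prob_j\{y|G_N\}$, i.e.\ it is the average over vertices of the product of single-site marginals. The quantity controlled in (\ref{eqn:graph<->tree}) is a variance-type object that forces a particular linear combination of the entries of $\nu$ to concentrate on its value $\onu(x,x) - 2|\cX|^{-1}\sum_{x'}\onu(x,x') = |\cX|^{-2} - 2|\cX|^{-2} = -|\cX|^{-2}$ (the ``sphere'' picture). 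The first step is therefore to translate (\ref{eqn:graph<->tree}) into the statement that, averaged over $j$, $\frac{1}{N}\sum_j \big[\prob_j\{x|G_N\}^2 - 2|\cX|^{-1}\sum_{x'}\prob_j\{x|G_N\}\prob_j\{x'|G_N\}\big]$ converges (in $L^1$ over $G_N$, and in fact in mean square) to $-|\cX|^{-2}$; completing the square, this says $\frac{1}{N}\sum_j \big(\prob_j\{x|G_N\} - |\cX|^{-1}\big)^2 \to 0$ after also using the trivial identity $\sum_{x'}\prob_j\{x'|G_N\}=1$. In other words, the single-site marginals are uniform in an $\ell_2$-averaged sense over all vertices and over the random graph.

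The second step is to pass from the one-vertex marginals to $k$-vertex marginals. Here I would use the vertex-permutation symmetry of the ensemble (noted before Proposition~\ref{prop:tree_convergence}): since $G_N$ is exchangeable in its labels, the law of $\prob_{i(1),\dots,i(k)}\{\cdot|G_N\}$ does not depend on the choice of distinct vertices $i(1),\dots,i(k)$, so it suffices to bound the $L^1$ (over $G_N$) deviation of the \emph{average} over all $k$-tuples, or equivalently to bound $\E_{G_N}\E_{j(1),\dots,j(k)}\big| \prob_{j(1),\dots,j(k)}\{\xi_1,\dots,\xi_k|G_N\} - |\cX|^{-k}\big|$ with the $j$'s uniform i.i.d.\ in $[N]$ (distinctness costing only $O(k^2/N)$). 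Now run a telescoping argument: replace the joint marginal one coordinate at a time. The key sub-estimate is that for a random vertex $j$, the \emph{conditional} single-site marginal $\prob_{j\,|\,\text{rest}}\{\cdot\,|\,\uX_{\text{rest}}, G_N\}$ is close to uniform on average — this is exactly where Lemma~\ref{lemma:Simple} enters, with $q(\cdot)$ playing the role of the ``message'' incoming to $j$ from the rest of the conditioning and $\hp$ the resulting conditional marginal. But more directly: two i.i.d.\ replicas conditioned to agree on a fixed finite set still have a type concentrating at $\onu$, so conditioning on finitely many coordinates does not destroy the $\ell_2$-average-uniformity of the remaining single-site marginals; iterating $k$ times and summing the errors gives (\ref{eqn:degenerescence}) in $L^1$ over $G_N$, hence with high probability.

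The main obstacle I anticipate is the second step — controlling \emph{conditional} marginals rather than unconditional ones. The hypothesis (\ref{eqn:graph<->tree}) is a statement about the unconditioned measure $\prob\{\,\cdot\,|G_N\}$ and its two-replica type, and it is not immediately a statement about $\prob\{\,\cdot\,|\uX_U = \xi_U, G_N\}$ for a revealed finite set $U$. The cleanest route is probably to avoid conditioning altogether: write $\prob_{i(1),\dots,i(k)}\{\xi_1,\dots,\xi_k|G_N\}$ in terms of the two-replica measure directly — e.g.\ $\E_{j(1),\dots,j(k)}\,\prob_{j(1),\dots,j(k)}\{\xi,\dots,\xi|G_N\}^2$ is, up to the distinctness correction, the probability that two independent replicas both take value $\xi$ on $k$ random vertices, which is $\E[\,\prod \text{(type-related quantities)}\,]$ and is pinned down by the concentration of $\nu$. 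Establishing that the \emph{second} moment of the random quantity $\prob_{j(1),\dots,j(k)}\{\cdot|G_N\}$ matches that of the constant $|\cX|^{-k}$, together with a matching first-moment computation, would give $L^2$ concentration at $|\cX|^{-k}$ and finish the proof via Chebyshev. I would therefore organize the argument around showing that (\ref{eqn:graph<->tree}) implies, for each $k$, that the $k$-point function of the replica-agreement process concentrates — the $k=1$ case being essentially the hypothesis itself, and the induction on $k$ being the routine but slightly delicate part.
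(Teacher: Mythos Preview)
Your step 1 is correct and coincides with the paper's opening move: setting $\sQ(\xi)=\frac{1}{N}\sum_i(\ind_{X^{(1)}_i=\xi}-|\cX|^{-1})(\ind_{X^{(2)}_i=\xi}-|\cX|^{-1})$, the hypothesis is $\E[\sQ(\xi)^2]\to 0$, and since $\E[\sQ(\xi)\mid G_N]=\frac{1}{N}\sum_i(p_i(\xi)-|\cX|^{-1})^2\ge 0$, one gets $\E_{G_N}\big[\frac{1}{N}\sum_i(p_i(\xi)-|\cX|^{-1})^2\big]\to 0$, i.e.\ the $k=1$ case.

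The gap is in step 2. You rightly discard the telescoping route (a), but route (b) has an analogous circularity you do not flag. With $j_1,\dots,j_k$ iid uniform, your second moment equals $\E\big[\prod_a\nu(\xi_a,\xi_a)\big]$ and your first moment equals $\E\big[\prod_a\nu_1(\xi_a)\big]$, where $\nu_1(\xi)=\frac{1}{N}\sum_i\ind_{X_i=\xi}$ is the one-replica empirical frequency. Neither $\nu(\xi,\xi)$ nor $\nu_1(\xi)$ is pinned down by the hypothesis, which only controls the combination $\sQ(\xi)$. Concretely, $\E[(\nu_1(\xi)-|\cX|^{-1})^2]=\E_{G_N}[\prob_{i(1),i(2)}\{\xi,\xi\mid G_N\}]-|\cX|^{-2}+o(1)$, so concentration of $\nu_1$ already requires the expected two-point marginal to equal $|\cX|^{-2}$, which is (part of) the $k=2$ statement. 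The ``routine'' induction you invoke is therefore circular as written: the moment computation for each $k$ presupposes lower-$k$ conclusions that themselves need the missing idea.

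The paper sidesteps this by never uncentering. Using vertex exchangeability to pass from averaged to fixed distinct indices at cost $O(1/N)$, and the fact that the product over two iid replicas factors into a perfect square given $G_N$, one has
\[
\E\Big[\prod_{a=1}^k \sQ(\xi_a)\Big]\;=\;\E_{G_N}\Big[\,\E\big[\textstyle\prod_{a}(\ind_{X_{i(a)}=\xi_a}-|\cX|^{-1})\,\big|\,G_N\big]^{2}\,\Big]+O(1/N)\, .
\]
The left side is at most $\E|\sQ(\xi_1)|\to 0$ since $|\sQ|\le 1$; no induction and no separate control of $\nu_1$ or $\nu(\xi,\xi)$ is needed. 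Hence every centered correlation $\E[\prod_{a\in U}(\ind_{X_{i(a)}=\xi_a}-|\cX|^{-1})\mid G_N]$ is $\le\ve$ whp for each nonempty $U\subseteq[k]$. Finally, expanding $\prod_a\ind_{X_{i(a)}=\xi_a}=\prod_a\big[(\ind_{X_{i(a)}=\xi_a}-|\cX|^{-1})+|\cX|^{-1}\big]$ writes the left side of (\ref{eqn:degenerescence}) as a sum over nonempty $U$ of such centered correlations (with coefficients at most $1$), giving the bound $2^k\ve$. The idea you are missing is exactly this: keep the centering throughout, so that the two-replica product $\prod_a\sQ(\xi_a)$ delivers the $L^2$ norm of the centered $k$-point function in one stroke.
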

\begin{proof}
Given two replicas $\uX^{(1)}$, $\uX^{(2)}$, define, for $\xi\in\cX$
(with $\ind_{\cdots}$ the indicator function)
\begin{eqnarray*}
\sQ(\xi) = \frac{1}{N}\sum_{i=1}^N  \left\{
\ind_{X^{(1)}_i=\xi}-\frac{1}{|\cX|}\right\}\left\{
\ind_{X_i^{(2)}=\xi}-\frac{1}{|\cX|}\right\}\, .\label{eq:Overlap}
\end{eqnarray*}
Notice that $\sQ(\xi) = \Delta\nu(\xi,\xi)-(2/|\cX|)
\sum_{x}\Delta\nu(\xi,x)$ is the quantity in Eq.~(\ref{eqn:graph<->tree}).
Therefore, under the hypothesis of Theorem \ref{thm:graph<->tree},
$\E\{\sQ(\xi)^2\}\stackrel{N}{\to}0$. Further, since
$|Q(\xi)|\le 1$ and using Cauchy-Schwarz, for any
$\xi_1,\dots,\xi_k\in\cX$
\begin{eqnarray*}
\left|\E\left\{\sQ(\xi_1)\cdots\sQ(\xi_k)\right\}\right| \le
\E|\sQ(\xi_1)|\stackrel{N}{\to} 0\, .\label{eq:kOverlaps}
\end{eqnarray*}

If we denote by $\sQ_i(\xi)$ the quantity on the right hand side of the sum
in Eq.~(\ref{eq:Overlap}) then $\sQ(\xi)$
is the uniform average of $\sQ_i(\xi)$ over a uniformly random $i\in [N]$.
By symmetry of the graph distribution with respect 
to permutation of the vertices in $[N]$, and since $|\sQ(\xi)|\le 1$ 
we get 
\begin{eqnarray*}
\E\left\{\sQ(\xi_1)\cdots\sQ(\xi_k)\right\}= 
\E\left\{\sQ_{i(1)}(\xi_1)\cdots \sQ_{i(k)}(\xi_k)\right\}+\ve_{k,N}\\
=\E\{\E\{ \prod_{a=1}^k (\ind_{X_{i(a)}=\xi_a}-|\cX|^{-1}) |G_N\}^2\}+\ve_{k,N}\, ,
\end{eqnarray*}
where $|\ve_{k,N}|$ is upper bounded by the probability that $k$
random variable uniform in $[N]$ are not distinct (which is $O(1/N)$).
Therefore the expectation on right hand side vanishes as $N\to\infty$
as well, which implies (since the quantity below is, again, bounded by $1$)
\begin{eqnarray}
\left|\E\left\{\left. 
\prod_{a=1}^k (\ind_{X_i(a)=\xi_a}-|\cX|^{-1}) \right|G_N\right\}
\right|\le \ve\label{eq:BoundCorrFun}
\end{eqnarray}
with high probability for any $\ve>0$. The proof is completed by
noting that the left hand side of Eq.~(\ref{eqn:degenerescence}) 
can be written as
\begin{eqnarray*}
\left|\sum_{\emptyset\neq U\subseteq [k]}
\E\left\{\left. 
\prod_{a\in U} (\ind_{X_{i(a)}=\xi_{i(a)}}-|\cX|^{-1})\right|G_N \right\}
\right|\le 2^k\ve\, ,
\end{eqnarray*}
where the last bound holds whp thanks to Eq.~(\ref{eq:BoundCorrFun}) and 
$\ve$ can eventually be rescaled.
\end{proof}

In order to write the proof Theorem \ref{thm:graph<->tree} we need to 
introduce a few shorthands. Given a graphical model $G_N$, and $U\subseteq[N]$,
we let
$\mu_U(\ux_U) \equiv \prob\left\{\uX_U=\ux_U|G_N\right\}$
(omitting subscripts if $U=V$). If $r$
is its root, $\ell\in\naturals$ and $U\subseteq\Ball(r,\ell)$, we define
$\mu^{<}_U(\ux_U) \equiv \prob\left\{\uX_U=\ux_U|\Ball(r,\ell)\right\}$
(i.e. $\mu^{<}$ is the distribution obtained by restricting the product
in (\ref{eq:GeneralGraphModel}) to edges $(i,j)\in \Ball(r,\ell)$).
Analogously 
$\mu^{>}_U(\ux_U) \equiv \prob\left\{\uX_U=\ux_U|\cBall(r,\ell)\right\}$.
Finally for $U\subseteq [N]$, we let $\rho_U(\ux_U)=1/|\cX|^{|U|}$
be the uniform distribution on $\cX^U$.
\begin{lemma}
Let $G_N$ be a graphical model rooted at $r$, and
$\ell\in\naturals$.
Then  for any $t\le \ell$,
\begin{align}
\Big|||\mu_{r,\cBall(r,t)}-&\mu_r\mu_{\cBall(r,t)}||_{\sTV}-
||\mu^<_{r,\cBall(r,t)}-\mu^<_r\mu^<_{\cBall(r,t)}||_{\sTV}\Big| \le\nonumber\\
&\le 9|\cX|^{2|\Ball(r,\ell)|}\, ||\mu^{>}_{\Edge(r,\ell)}-\rho_{\Edge(r,\ell)}||_{\sTV}\, .\label{eq:treeBoundary}
\end{align}
\end{lemma}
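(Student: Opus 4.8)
The plan is to relate the joint law of $(X_r,\uX_{\cBall(r,t)})$ under the full model $\mu$ to its law under the ``inside'' model $\mu^<$, by treating the ``outside'' interactions as a multiplicative perturbation acting only through the boundary $\Edge(r,\ell)$. Since $t\le\ell$, the variables $X_r$ and $\uX_{\cBall(r,t)}$ are both contained in (or separated by) $\Ball(r,\ell)$ in the sense that every edge of $G_N$ either lies in $\Ball(r,\ell)$ or in $\cBall(r,\ell)$, and the two edge sets meet only at vertices of $\Edge(r,\ell)$. The key structural identity I would establish first is that, conditioning on $\uX_{\Edge(r,\ell)}=\uz$, the measure $\mu$ factorizes as a product of a $\mu^<$-type measure on $\Ball(r,\ell)$ and a $\mu^>$-type measure on $\cBall(r,\ell)$. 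Consequently $\mu$ restricted to variables in $\Ball(r,\ell)$ equals $\mu^<$ reweighted by the factor $w(\uz)\equiv\mu^>_{\Edge(r,\ell)}(\uz)/\rho_{\Edge(r,\ell)}(\uz)$ (up to normalization), i.e. $\mu$ is obtained from $\mu^<$ by tilting the boundary marginal from whatever $\mu^<$ gives it toward the boundary marginal $\mu^>_{\Edge(r,\ell)}$.

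\textbf{Main steps.} First, I would make the factorization precise: write $\mu(\ux)=\frac{1}{Z}\prod_{(ij)\in\Ball(r,\ell)}\psi_{ij}\prod_{(ij)\in\cBall(r,\ell)}\psi_{ij}$, sum over all variables strictly outside $\Ball(r,\ell)$, and observe the outside product depends on $\Ball(r,\ell)$-variables only through $\uX_{\Edge(r,\ell)}$; this yields $\mu_{\Ball(r,\ell)}(\ux)\propto\big(\prod_{(ij)\in\Ball(r,\ell)}\psi_{ij}\big)\,\widetilde{Z}^>(\ux_{\Edge(r,\ell)})$, where $\widetilde{Z}^>$ is the partition function of $\cBall(r,\ell)$ with boundary fixed, hence proportional to $\mu^>_{\Edge(r,\ell)}$. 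Second, I would apply Lemma \ref{lemma:Simple} with $\cS=\cX^{\Edge(r,\ell)}$, $p=\mu^<_{\Edge(r,\ell)}$, $q=\mu^>_{\Edge(r,\ell)}$ (after checking that $q_0=\rho_{\Edge(r,\ell)}$ is indeed the relevant reference and that the tilt has exactly the form $\hp(x)=p(x)q(x)/z$ required by that lemma — this is where the factorization must be invoked carefully): this gives $\|\mu_{\Edge(r,\ell)}-\mu^<_{\Edge(r,\ell)}\|_{\sTV}\le 3|\cX|^{2|\Edge(r,\ell)|}\|\mu^>_{\Edge(r,\ell)}-\rho_{\Edge(r,\ell)}\|_{\sTV}$. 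Third, since conditioning on $\uX_{\Edge(r,\ell)}$ makes the distribution of $(X_r,\uX_{\cBall(r,t)})$ identical under $\mu$ and under $\mu^<$ (both reduce to the same conditional $\mu^<(\,\cdot\,|\uX_{\Edge(r,\ell)})$, because all edges touching $X_r$ or $\cBall(r,t)\cap\Ball(r,\ell)$ lie inside $\Ball(r,\ell)$), the joint laws $\mu_{r,\cBall(r,t)}$ and $\mu^<_{r,\cBall(r,t)}$ are mixtures over $\uX_{\Edge(r,\ell)}$ of the \emph{same} kernels with mixing weights $\mu_{\Edge(r,\ell)}$ resp. $\mu^<_{\Edge(r,\ell)}$; hence the total-variation distance between any two corresponding marginals or products of marginals is bounded by $\|\mu_{\Edge(r,\ell)}-\mu^<_{\Edge(r,\ell)}\|_{\sTV}$. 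Fourth, I would combine this (used three times: for the joint $\mu_{r,\cBall(r,t)}$, for $\mu_r$, and for $\mu_{\cBall(r,t)}$) with the triangle inequality and the bound $\|ab-a'b'\|_{\sTV}\le\|a-a'\|_{\sTV}+\|b-b'\|_{\sTV}$ for product measures, collecting a factor $3$; replacing $|\Edge(r,\ell)|$ by the cruder $|\Ball(r,\ell)|$ in the exponent and multiplying the constants $3\cdot 3=9$ gives exactly \eqref{eq:treeBoundary}.

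\textbf{Main obstacle.} The delicate point is Step 1–2: verifying that the outside-world influence on the inside model is \emph{exactly} a multiplicative reweighting of the boundary marginal of the form $\hp\propto p\cdot q$ with $q\propto\mu^>_{\Edge(r,\ell)}$, rather than something messier. One must be careful that $\mu^>$ is defined as the measure of $\cBall(r,\ell)$ (edges with both endpoints outside $\Ball(r,\ell-1)$), that its marginal on $\Edge(r,\ell)$ is what appears, and that the reference measure is the uniform $\rho$ — this matches the normalization convention in Lemma \ref{lemma:Simple} only because the inside partition function absorbs everything else. Once this identification is clean, the rest is triangle inequalities and the elementary mixture bound; no probabilistic input beyond the deterministic graph surgery is needed here, so the lemma holds for an arbitrary (deterministic) graphical model, as stated.
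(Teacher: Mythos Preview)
Your proof is correct and follows essentially the same route as the paper: both establish the factorization $\mu_{\Ball(r,\ell)}\propto \mu^<_{\Ball(r,\ell)}\cdot\mu^>_{\Edge(r,\ell)}$, invoke Lemma~\ref{lemma:Simple}, and then pass to the relevant marginals by triangle inequality (with the same implicit use of the Markov property to replace $\cBall(r,t)$ by $\cBall(r,t)\cap\Ball(r,\ell)$). The only minor difference is the order of operations: the paper applies Lemma~\ref{lemma:Simple} directly on $\cS=\cX^{\Ball(r,\ell)}$ to bound $\|\mu_{\Ball(r,\ell)}-\mu^<_{\Ball(r,\ell)}\|_{\sTV}$ and then marginalizes, whereas you apply it on the smaller space $\cS=\cX^{\Edge(r,\ell)}$ to bound $\|\mu_{\Edge(r,\ell)}-\mu^<_{\Edge(r,\ell)}\|_{\sTV}$ and then use the common-kernel mixture bound---this actually yields the sharper exponent $2|\Edge(r,\ell)|$ before you relax it to $2|\Ball(r,\ell)|$.
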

\begin{proof}
First notice that, by elementary properties  of the total variation distance,
$||\mu_{U}-\mu^<_{U}||_{\sTV}\le
||\mu_{\cBall(r,\ell)}-\mu^<_{\Ball(r,\ell)}||_{\sTV}$
for any $U\subseteq \Ball(r,\ell)$. Applying this remark and triangular 
inequality, the left hand side of 
Eq.~(\ref{eq:treeBoundary}) can be upper bounded by 
$3\, ||\mu_{\cBall(r,\ell)}-\mu^<_{\Ball(r,\ell)}||_{\sTV}$.
Next notice that, as a consequence of Eq.~(\ref{eq:GeneralGraphModel}) 
and of the fact that $\Ball(r,\ell)$ and $\cBall(r,\ell)$ are edge disjoint
(and using the shorthands $\Ball(\ell)$ and $\Edge(\ell)$ for
 $\Ball(r,\ell)$ and $\Edge(r,\ell)$)
\begin{eqnarray*}
 \mu_{\Ball(\ell)}(\ux_{\Ball(\ell)}) 
= \frac{\mu^{<}_{\Ball(\ell)}(\ux_{\Ball(\ell)}) \mu^{>}_{\Edge(\ell)}(\ux_{\Edge(\ell)}) }
{\sum_{\ux'_{\Ball(\ell)}}\mu^{<}_{\Ball(\ell)}(\ux'_{\Ball(\ell)}) \mu^{>}_{\Edge(\ell)}(\ux'_{\Edge(\ell)})}\, .
\end{eqnarray*}
We can therefore apply
Lemma \ref{lemma:Simple} whereby $p$ is $\mu^{<}_{\Ball(\ell)}$, 
$\hp$ is $\mu_{\Ball(\ell)}$, $q$ is
$\mu^{>}_{\Edge(\ell)}$, and $\cS=\cX^{\Ball(\ell)}$. This
proves the thesis.
\end{proof}

\vspace{-0.2cm}

\begin{proof}[Proof of Theorem \ref{thm:graph<->tree}] 
Let $\Delta_N$ denote  the left hand side of  
Eq.~(\ref{eq:treeBoundary}). We claim that its 
expectation (with respect to a random graph $G_N$) vanishes as $N\to\infty$. 
Since the probability that $\Ball(r,\ell)\ge M$ can be made arbitrarily 
small by letting $M$ large enough, cf. Lemma \ref{prop:BoundNeighborhood}, 
and using the fact that the left hand side of Eq.~(\ref{eq:treeBoundary})
is bounded by $1$, it is sufficient to prove that
\begin{eqnarray*}
\sum_{|\Graph|\le M} \prob\{\Ball(r,\ell)=\Graph\}
\E\{\Delta_N|\Ball(r,\ell)=\Graph\}\le\\
\le  K^{M+1}
\sum_{|\Graph|\le M} 
\E\{ ||\mu^{>}_{\Edge(r,\ell)}-\rho_{\Edge(r,\ell)}||_{\sTV}
|\Ball(r,\ell)=\Graph\}\, ,
\end{eqnarray*}
vanishes as $N\to\infty$. Each term in the sum is the expectation, with 
respect to a random graph over $N-|\Graph|\ge N-M$ vertices
of the total variation distance between
the joint distribution of a fixed set of vertices, and the uniform 
distribution (for $\Edge = \Edge(r,\ell)$):
\begin{eqnarray*}
||\mu^{>}_{\Edge}-\rho_{\Edge}||_{\sTV}=
\frac{1}{2}\sum_{\ux_{\Edge}}\left|\prob_{\Edge}
\{\ux_{\Edge}|\cBall(i,\ell)\}
-|\cX|^{-|\Edge|}\right|\, .
\end{eqnarray*}
This vanishes by Lemma \ref{degenerescence}, thus proving the above claim.

This implies that 
there exists $\ve>0$ such that 
$||\mu_{r,\cBall(r,t)}-\mu_r\mu_{\cBall(r,t)}||_{\sTV}\ge
\ve$ with positive probability, if and only if
there exists $\ve'>0$ such that 
$||\mu^<_{r,\cBall(r,t)}-\mu^<_r\mu^<_{\cBall(r,t)}||_{\sTV}\ge \ve'$
with positive probability.
In other words, since 
$\mu(\,\cdot\,) \equiv \prob\{\,\cdots\,|G_N\}$, reconstruction is 
solvable if and only if 
$||\mu^<_{r,\cBall(r,t)}-\mu^<_r\mu^<_{\cBall(r,t)}||_{\sTV}\ge \ve'$
with positive probability.

Finally, recall that 
$\mu^{<}(\,\cdot\,)\equiv\prob\{\,\cdot\,|\Ball(r,\ell)\}$ and that 
$\Ball(r,\ell)$ converges in distribution to $\Tree(\ell)$, by 
Lemma \ref{prop:tree_convergence}. Since 
$||\mu^<_{r,\cBall(i,t)}-\mu^<_r\mu^<_{\cBall(r,t)}||_{\sTV}$
is a bounded function of $\cBall(r,t)$ (and using as above Lemma 
\ref{prop:BoundNeighborhood} to reduce to a finite set of graphs),
it converges in distribution to
$\Vert \prob_{r,\cBall(r,t)}\{\,\cdot\, ,\,\cdot\,|\Tree_\ell\}-
 \prob_{r}\{\,\cdot\,|\Tree_\ell\} \prob_{\cBall(r,t)}\{\,\cdot\,|\Tree_\ell\}\Vert_{\sTV}$. We conclude that $||\mu^<_{r,\cBall(i,t)}-\mu^<_r
\mu^<_{\cBall(r,t)}||_{\sTV}\ge \ve'$ with positive probability
if and only if reconstruction is tree solvable, thus proving the thesis.
\end{proof}
\vspace{-0.6cm}

%
%
\Section{Two successful applications}

\vspace{-0.1cm}

\SubSection{The Ising spin glass}

\vspace{-0.1cm}

\begin{proof}[Proof of Theorem \ref{thm:spinglass}]The behavior of 
spin-glasses
on  Poisson random graphs has been studied extensively in \cite{Guerra}. In 
particular, the \emph{two-replica overlap} $q_{12} = 
N^{-1}\sum_i X_i^{(1)} X_i^{(2)}$ satisfies  $ \E\{q_{12}^2\} = 
O\left(N^{-1}\right)$ for $2\gamma(1-2\eps)^2 < 1 $
(``high-temperature'' region).
It is easy to check that 
the quantity in the expectation  in Eq.~(\ref{eqn:graph<->tree}) 
equals
$(q_{12}/4)^2$ both for $x=+1$, and $-1$. Hence, if $2\gamma(1-2\eps)^2 < 1$, 
Theorem \ref{thm:graph<->tree} 
applies. Since tree reconstruction is unsolvable in that case
\cite{EvaKenPerSch} (notice that on trees, reconstruction for 
the spin glass model and the ferromagnet are equivalent), we 
obtain that graph reconstruction is unsolvable as well.

Conversely, suppose that $2\gamma(1-2\eps)^2 > 1$.
First assume $ \E\{q_{12}^2\}\stackrel{N}{\to}  0$. 
Since tree reconstruction is solvable in this case,
Theorem \ref{thm:graph<->tree} would  imply that graph 
reconstruction is solvable as well. It is thus sufficient to prove that graph 
reconstruction 
is solvable if $ \E\{q_{12}^2\} \not\to 0$. 
Equivalently,
if for any $\ve>0$, there exists $t=t(\ve)$
such that $||\prob_{r,\cBall(r,t)}\{\,\cdot\, ,\,\cdot\,|G_N\}-
\prob_{r}\{\,\cdot\,|G_N\}\prob_{\cBall(r,t)}\{\,\cdot\,|G_N\}||_{\sTV}\le\ve$
with high probability, then
$ \E\{q_{12}^2\}\stackrel{N}{\to}  0$.

Since $q_{12}$ is the average of $X_{i}^{(1)}X_{i}^{(2)}$
over a uniformly random $i\in[N]$, then $\E\{ q_{12}^2\}$
is the average of  $\E\{X_{r}^{(1)}X_{r}^{(2)}X_{j}^{(1)}X_{j}^{(2)}\}=
\E\{\E\{X_{r}X_{j}|G_N\}^2\}$ over $r$, $j\in [N]$ uniform and independent.
Fixing $t=t(\ve)$ as above, we can neglect the 
probability that $j\in\Ball(r,t)$, since this is  $N^{-1}$
times the expected size of $\Ball(r,t)$, that is bounded by Lemma 
\ref{prop:BoundNeighborhood}. Therefore
$\E\{q_{12}^2\} = \E\left\{\E[X_rX_j|G_N]^2\, \ind_{j \notin 
\Ball(r,t)}\right]+o(1)$. On the other hand, if
$j\not\in\Ball(r,t)$
\begin{align*}
&\big|\E[X_rX_j|G_N]\big|
 \le\\
&\le ||\prob_{r,\cBall(r,t)}\{\,\cdot\, ,\,\cdot\,|G_N\}-
\prob_{r}\{\,\cdot\,|G_N\}\prob_{\cBall(r,t)}\{\,\cdot\,|G_N\}||_{\sTV}\, .
\end{align*}
We deduce that $\big|\E[X_rX_j|G_N]\big|\le \ve$ with high 
probability ant hence $\lim_N\E\{q_{12}\}\le \ve^2$
The thesis follows by recalling that $\ve$ is an arbitrary positive number.
\end{proof}
\vspace{-0.25cm}

%
%
\SubSection{$q$-colorings of Poisson random graphs}

The application Theorem \ref{thm:graph<->tree} to this case require some 
technical groundwork. For space reasons we limit to quoting the 
results deferring the proofs to a complete publication. 
We denote by $U(\ux)$ be the number of monochromatic edges under 
coloring $\ux$, by $Z= \sum_{\ux}\eps^{U(\ux)}$ the partition 
function, by $Z_{\rm b}$ the modified partition function where the sum is 
restricted to balanced
colorings (such that each color occupies $N/q$ vertices), and
by $Z_2(\nu) = \sum_{\ux^{(1)},\ux^{(2)}}^{\nu} \eps^{U(\ux^{(1)})+U(\ux^{(2)})}$,
where the sum is restricted to couples of colorings with two-replica type 
$\nu=\{\nu(x,y)\}_{x,y\in [q]}$.
As above we denote by $\onu(x,y) =1/q^2$ for any $x,y\in[q]$ the uniform 
matrix. 
Finally we introduce the following function over two-replica types
$\nu$ (i.e. over $q\times q$ matrices with non-negative entries 
normalized to one):
\begin{align*}
\phi(\nu) = -\sum_{xy}&\nu(x,y)\log\nu(x,y)+\\
&+\bar{\gamma}\log\Big\{
1-\bar{\eps}F(\nu)+\bar{\eps}^2
\sum_{x,y}\nu(x,y)^2\Big\}\, ,
\end{align*}
where $\bar{\eps} = 1-\eps$ and 
$F(\nu) \equiv2\sum_x(\sum_y\nu(x,y))^2$.

The first two preliminary remarks are
 a combinatorial calculation that straightforwardly generalizes
the result of \cite{AchlioptasNaor} for proper colorings,
and  a good estimate on the balanced partition function.
\begin{lemma}\label{lemma:2ndColoring}
Let $M$ be the number of edges in a Poisson graph. Then
$\E[ Z_2(\nu)|M=\bar{\gamma}N ]\le K\, e^{n\phi(\nu)}$.
\end{lemma}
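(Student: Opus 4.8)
The plan is to compute the first moment $\E[Z_2(\nu)\mid M=\bar\gamma N]$ directly by separating the entropic (choice of the pair of colorings with prescribed joint type $\nu$) and the energetic (weight $\eps^{U(\ux^{(1)})+U(\ux^{(2)})}$ averaged over the $M=\bar\gamma N$ random edges) contributions, and then bounding each by the corresponding term in $e^{n\phi(\nu)}$. Concretely, first I would count the number of ordered pairs $(\ux^{(1)},\ux^{(2)})\in[q]^N\times[q]^N$ whose two-replica type is exactly $\nu$: this is the multinomial coefficient $\binom{N}{\{N\nu(x,y)\}_{x,y}}$, which by Stirling is $\exp\{N(-\sum_{xy}\nu(x,y)\log\nu(x,y)) + O(\log N)\}$, giving the first term of $\phi$. (One must check that this holds uniformly over valid types, which is where the polynomial prefactor $K$ and the passage from $N$ to $n$ in the exponent absorb lower-order terms; since the number of types is itself polynomial in $N$, this is harmless.)

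Next, for a single edge $(i,j)$ chosen uniformly among the $\binom{N}{2}$ pairs (the conditional law of each of the $\bar\gamma N$ edges given $M$), and conditional on the colorings, the probability that it is "doubly non-monochromatic" contributes weight $1$, a singly-monochromatic edge contributes $\eps$, a doubly-monochromatic edge contributes $\eps^2$; so the expected single-edge weight is, up to $O(1/N)$ corrections from sampling without replacement versus with replacement,
\[
w(\nu) \;=\; 1 - \bar\eps\,\Pr[\text{mono in }\ux^{(1)}] - \bar\eps\,\Pr[\text{mono in }\ux^{(2)}] + \bar\eps^2\,\Pr[\text{mono in both}],
\]
where $\Pr[\text{mono in }\ux^{(a)}] = \sum_x(\text{color-}x\text{ density in }\ux^{(a)})^2$ and the color densities of $\ux^{(1)}$ and $\ux^{(2)}$ are the row and column marginals of $\nu$. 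Summing the two marginal terms gives exactly $\bar\eps F(\nu)$ with $F(\nu)=2\sum_x(\sum_y\nu(x,y))^2$, while $\Pr[\text{mono in both}] = \sum_{x,y}\nu(x,y)^2$; hence $w(\nu) = 1-\bar\eps F(\nu)+\bar\eps^2\sum_{x,y}\nu(x,y)^2$. Since the $M=\bar\gamma N$ edges are (conditionally) i.i.d.\ in the Poisson/configuration-type model up to the sampling-without-replacement correction, the expected energetic weight is $w(\nu)^{\bar\gamma N}\cdot(1+o(1))^N = \exp\{\bar\gamma N\log w(\nu)\}\cdot e^{o(N)}$, which is precisely the second term of $\phi$. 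Multiplying the entropic and energetic estimates yields $\E[Z_2(\nu)\mid M=\bar\gamma N]\le K e^{n\phi(\nu)}$.

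The main obstacle I anticipate is controlling the independence/correction issue cleanly: the $M$ edges of the conditioned Poisson graph are sampled as an unordered multiset of uniform pairs (possibly with repetition and loops), so one needs a short argument that replacing this by i.i.d.\ edges changes the edge-weight product by at most $e^{o(N)}$ — this is standard but needs the observation that the number of collisions among $\bar\gamma N$ uniform pairs out of $\binom{N}{2}$ is $O(1)$ in expectation, so their effect on the log-weight is $o(N)$. A secondary (purely bookkeeping) point is making the Stirling estimate uniform over the $\mathrm{poly}(N)$ possible types so that a single constant $K$ works for all $\nu$; this is why the statement is phrased with an inequality and a prefactor rather than an asymptotic equality. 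Everything else is the routine generalization of the proper-colorings computation in \cite{AchlioptasNaor} to the weighted ($\eps>0$) case, with $\eps^{U}$ in place of the indicator $\1_{U=0}$.
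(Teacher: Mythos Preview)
The paper does not actually prove this lemma --- it defers the proof to a longer publication and describes it only as ``a combinatorial calculation that straightforwardly generalizes the result of \cite{AchlioptasNaor} for proper colorings'' --- and your entropy/energy decomposition (multinomial count of pairs of colorings with type $\nu$, times the $\bar\gamma N$-th power of the single-edge expected weight) is precisely that calculation. One small caveat: your identification of $p_1+p_2$ with the paper's $F(\nu)=2\sum_x(\sum_y\nu(x,y))^2$ holds literally only when the row and column marginals of $\nu$ have equal $\ell^2$ norms; the honest per-edge weight is $1-\bar\eps\big[\sum_x(\sum_y\nu(x,y))^2+\sum_y(\sum_x\nu(x,y))^2\big]+\bar\eps^2\sum_{x,y}\nu(x,y)^2$, so either the paper's $F$ is shorthand for this symmetric sum or the statement is tacitly used only for types with (nearly) uniform marginals, which is the only regime invoked downstream via Lemma~\ref{lemma:Balance}.
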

\begin{lemma}\label{lemma:ColoringPartFun}
Let $\gamma<\gamma_q=(q-1)\log(q-1)$. Then,
for any $\xi>0$, $Z_{\rm b}\ge e^{N[\phi(\onu)-\xi]/2}$
with high probability. Further, 
$\E\,\log Z_{\rm b}\ge N[\log q+\gamma\log(1-\bar{\eps}/q)] + O(N^{1/2})$.
\end{lemma}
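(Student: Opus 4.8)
The plan is a conditional second--moment argument on the balanced partition function, combined with a McDiarmid--type concentration estimate to upgrade ``positive probability'' to ``with high probability'' and to pin down $\E\log Z_{\rm b}$ to precision $O(N^{1/2})$. Throughout we condition on $M$, the number of edges of $G_N$, which is Poisson of mean $\gamma N+O(1)$, and we write $\phi_M$ for the function $\phi$ with $\bar{\gamma}$ replaced by $M/N$, so that $\phi_M(\onu)=2\log q+2(M/N)\log(1-\bar{\eps}/q)$ and $\tfrac12\phi(\onu)=\log q+\gamma\log(1-\bar{\eps}/q)$. \emph{First moment:} conditioned on $M$, each edge is an independent uniform slot, monochromatic under a fixed balanced colouring with probability exactly $1/q$, so $\E[\,\eps^{U(\ux)}\mid M\,]=(1-\bar{\eps}/q)^{M}$; together with Stirling applied to $\binom{N}{N/q,\dots,N/q}$ this gives $\log\E[Z_{\rm b}\mid M]=\tfrac N2\phi_M(\onu)-\tfrac{q-1}{2}\log N+O(1)$.

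\emph{Second moment.} Write $\E[Z_{\rm b}^2\mid M]=\sum_\nu\E[Z_2(\nu)\mid M]$, the sum over valid two--replica types $\nu$ whose two marginals are both uniform (these are exactly the types that can arise from two balanced colourings). Lemma~\ref{lemma:2ndColoring} bounds each summand by $Ke^{N\phi_M(\nu)}$. The decisive input is the analysis of \cite{AchlioptasNaor}, which for $\eps\in(0,1]$ extends essentially verbatim to the present $\phi$: when $\gamma<\gamma_q=(q-1)\log(q-1)$ (hence also for $M/N$ in a neighbourhood of $\gamma$) the type $\onu$ is the unique maximiser of $\phi_M$, and the Hessian is negative definite on the affine space of type perturbations. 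A Laplace estimate over the lattice of types of mesh $1/N$ then yields $\E[Z_{\rm b}^2\mid M]\le C\,(\E[Z_{\rm b}\mid M])^{2}$ with $C$ uniform in $M/N$ near $\gamma$ (up to a polynomial factor $N^{O(q)}$ if one only invokes Lemma~\ref{lemma:2ndColoring} as stated; the sharp constant is what the refined computation of \cite{AchlioptasNaor} delivers and is what yields the $O(N^{1/2})$ precision below).

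\emph{From moments to whp.} Paley--Zygmund gives $\prob\bigl[\,Z_{\rm b}\ge\tfrac12\E[Z_{\rm b}\mid M]\mid M\,\bigr]\ge 1/(4C)$. Conditioned on $M$ the $M$ edge--slots are i.i.d.\ uniform, and rerouting one slot changes $\log Z_{\rm b}$ by at most $2|\log\eps|$ (here $\eps>0$ is essential), so McDiarmid gives $\prob\bigl[\,|\log Z_{\rm b}-\E[\log Z_{\rm b}\mid M]|\ge t\mid M\,\bigr]\le 2\exp(-c(\eps)t^{2}/M)$. Since $1/(4C)$ does not decay with $N$, comparing the two tail bounds with $t$ a large multiple of $\sqrt{M}=O(\sqrt N)$ forces $\E[\log Z_{\rm b}\mid M]\ge\log\tfrac12\E[Z_{\rm b}\mid M]-O(\sqrt N)=\tfrac N2\phi_M(\onu)-O(\sqrt N)$; reinserting this into the concentration bound with, say, $t=N^{2/3}$ shows $\log Z_{\rm b}\ge\tfrac N2\phi_M(\onu)-o(N)$ with conditional probability $1-o(1)$. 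Finally one removes the conditioning: on the event $|M-\gamma N|\le\sqrt N\log N$ (probability $1-o(1)$) one has $\tfrac N2\phi_M(\onu)=\tfrac N2\phi(\onu)+O(\sqrt N\log N)$, which gives the first assertion $Z_{\rm b}\ge e^{[\phi(\onu)-\xi]N/2}$ whp for any fixed $\xi>0$; and for the mean, $\E\log Z_{\rm b}=\E_M\E[\log Z_{\rm b}\mid M]\ge\E_M[\tfrac N2\phi_M(\onu)]-O(\sqrt N)=N\log q+\E[M]\log(1-\bar{\eps}/q)-O(\sqrt N)$, with the Poisson tails $|M-\gamma N|>\sqrt N\log N$ absorbed using the crude deterministic bound $\log Z_{\rm b}\ge N\log q-M|\log\eps|-O(\log N)$ (from $Z_{\rm b}\ge\binom{N}{N/q,\dots,N/q}\eps^{M}$) against the super--exponential decay of $\prob[M=m]$; since $\E[M]=\gamma N+O(1)$ this is the claimed $N[\log q+\gamma\log(1-\bar{\eps}/q)]+O(N^{1/2})$.

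\emph{Main obstacle.} The only non-routine step is the second moment: one must show that $\onu$ is the unique, nondegenerate maximiser of $\phi_M$ for \emph{every} $\gamma<\gamma_q$ and every $\eps\in(0,1]$ — this is precisely where the hypothesis $\gamma<\gamma_q$ is consumed, and it is a (by now standard, but delicate) optimisation over the simplex of $q\times q$ matrices — and that the Laplace summation over types costs only the right power of $N$, so that the first-- and second--moment prefactors match and the concentration comparison loses only $O(N^{1/2})$ rather than $O(\sqrt{N\log N})$. Everything else (the two Stirling estimates, Paley--Zygmund, McDiarmid, and the Poisson--tail bookkeeping) is elementary.
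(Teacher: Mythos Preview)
The paper does not actually give a proof of this lemma: it is one of the two ``preliminary remarks'' for which, the authors write, ``For space reasons we limit to quoting the results deferring the proofs to a complete publication,'' adding only that it ``straightforwardly generalizes the result of \cite{AchlioptasNaor}.'' So there is no detailed argument to compare against.

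That said, your approach is precisely the one the paper is pointing to and is the standard route for this kind of statement: compute $\E[Z_{\rm b}\mid M]$ and $\E[Z_{\rm b}^2\mid M]$, invoke the Achlioptas--Naor optimisation (extended to $\eps\in(0,1]$) to show $\onu$ is the unique nondegenerate maximiser of $\phi_M$ for $\gamma<\gamma_q$, apply Paley--Zygmund to get $Z_{\rm b}$ near its mean with probability bounded away from zero, and then use McDiarmid concentration of $\log Z_{\rm b}$ (bounded edge-differences of order $|\log\eps|$, valid exactly because $\eps>0$) to upgrade to whp and to pin down $\E\log Z_{\rm b}$. The bookkeeping over the Poisson fluctuations of $M$ is handled correctly.

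One point you rightly flag as the real obstacle deserves emphasis: to land exactly on $O(N^{1/2})$ rather than $O(\sqrt{N\log N})$, the ratio $\E[Z_{\rm b}^2\mid M]/(\E[Z_{\rm b}\mid M])^2$ must be bounded by an $N$-independent constant, which requires the sharp Laplace computation matching the polynomial prefactors of the first and second moments --- merely summing the bound of Lemma~\ref{lemma:2ndColoring} over $O(N^{q^2})$ types would cost a $\sqrt{\log N}$ factor in the concentration comparison. This is the only step that is not routine, and it is where both the hypothesis $\gamma<\gamma_q$ and the full strength of \cite{AchlioptasNaor} (suitably adapted to $\eps>0$) are consumed.
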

Our last remark is that, for $\gamma<\gamma_q$, balanced
colorings dominate the measure $\mu$.
\begin{lemma}\label{lemma:Balance}
Let $\Gamma$ and $\gamma_q$ be as in the statement of Theorem
\ref{thm:Colorings}, and $\nu(x,y)$ be as in \ref{thm:graph<->tree}.
Then, for any $\gamma\in [0,\gamma_{q})\setminus \Gamma$,
any $x\in \cX$, and any $\delta >0$, $|\sum_y\nu(x,y)-q^{-1}|\le \delta$
with high probability.
\end{lemma}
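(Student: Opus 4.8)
The plan is to control the marginal $\nu_1(x) \equiv \sum_y \nu(x,y)$ — which is nothing but the fraction of vertices assigned color $x$ under a single replica $\uX^{(1)}$ — and to show it concentrates at $q^{-1}$. First I would reduce everything to a first-moment (annealed) computation. Observe that $\E\{|\{j : X_j = x\}| / N\}$ equals an average over couples of colorings only if we write $\nu_1(x)^2$, which is the natural quantity: the expected value of $\nu_1(x)$ over the measure $\mu$ and over $G_N$ is a ratio $\E[\sum_{\ux} \nu_1^{\ux}(x)\,\eps^{U(\ux)}] / \E[Z]$ only in the annealed sense, so instead I would work with the two-replica representation, where $\sum_y \nu(x,y)^2$-type quantities appear and Lemma~\ref{lemma:2ndColoring} gives the clean bound $\E[Z_2(\nu) \mid M = \bar\gamma N] \le K e^{N\phi(\nu)}$.

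The key steps, in order. (a) Condition on the number of edges $M = \bar\gamma N$ (a $1/\sqrt N$-probability event, absorbed into the $O(N^{1/2})$ and whp statements), so that Lemmas~\ref{lemma:2ndColoring} and~\ref{lemma:ColoringPartFun} apply. (b) For a deviation parameter $\delta$, let $B_\delta$ be the set of two-replica types $\nu$ with $|\nu_1(x) - q^{-1}| > \delta$ for some $x$; the event in the Lemma fails only if a positive fraction of the $\mu\otimes\mu$-mass sits on types in $B_\delta$, i.e. $\sum_{\nu \in B_\delta} Z_2(\nu) \ge \eta\, Z^2$ for some $\eta>0$. (c) Markov's inequality plus Lemma~\ref{lemma:2ndColoring} bound $\E[\sum_{\nu\in B_\delta} Z_2(\nu) \mid M]$ by (a polynomial factor times) $\max_{\nu \in B_\delta} e^{N\phi(\nu)}$, since the number of valid types is polynomial in $N$. (d) On the denominator side, Lemma~\ref{lemma:ColoringPartFun} gives $Z \ge Z_{\rm b} \ge e^{N[\phi(\onu) - \xi]/2}$ whp, hence $Z^2 \ge e^{N[\phi(\onu)-\xi]}$. (e) Combining, the bad event forces $\max_{\nu\in B_\delta}\phi(\nu) \ge \phi(\onu) - \xi - o(1)$. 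So it suffices to show that $\phi$ has a \emph{strict} maximum at $\onu$ over the relevant domain, with a quantitative gap: $\sup_{\nu \in B_\delta} \phi(\nu) \le \phi(\onu) - c(\delta)$ for some $c(\delta)>0$, for $\gamma < \gamma_q$ outside a measure-zero set $\Gamma$. Choosing $\xi < c(\delta)$ then yields a contradiction, and a union bound over the finitely many colors $x$ and a Borel–Cantelli / whp argument finishes.

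The main obstacle is step (e): the variational analysis of $\phi(\nu)$ and the proof that $\onu$ is its unique global maximizer with a gap on $B_\delta$, for all $\gamma \in [0,\gamma_q)$ except a null set. This is exactly the delicate point inherited from \cite{AchlioptasNaor}: the function $\phi$ is a difference of an entropy term $-\sum \nu\log\nu$ (maximized at uniform) and a $\bar\gamma$-weighted convex-ish log term involving $F(\nu)$ and $\sum \nu(x,y)^2$, and establishing that the entropy wins — equivalently that the second-moment/first-moment ratio is $e^{o(N)}$ — is known to hold only below $\gamma_q$ and, with the current technique, only after excluding a measure-zero set of $\gamma$ where a competing local maximum could tie. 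I would handle it by splitting the domain: near $\onu$, a Hessian computation shows $\phi(\onu)$ is a strict local max (the entropy Hessian dominates for $\gamma$ not too large — and $\gamma < \gamma_q$ suffices here because we only need it near the uniform point); away from $\onu$, one invokes the global bound from the second-moment computation of \cite{AchlioptasNaor}, which is precisely where the average-over-$\gamma$ device (and hence the null set $\Gamma$) enters. Finally, transferring from ``$M=\bar\gamma N$ conditioned'' back to the unconditioned Poisson graph, and from annealed type bounds to the whp statement about $\mu$, is routine given Lemma~\ref{prop:BoundNeighborhood}-style concentration and the polynomial count of types.
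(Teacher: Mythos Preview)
Your approach is genuinely different from the paper's, and step~(e) has a real gap. You reduce everything to the variational claim $\sup_{\nu\in B_\delta}\phi(\nu)<\phi(\onu)$ for types with \emph{unbalanced} marginals, and attribute this to \cite{AchlioptasNaor}. But the Achlioptas--Naor bound (as the paper itself states in the proof of Theorem~\ref{thm:Colorings}) is $\phi(\nu)\le\phi(\onu)-A'\|\nu-\onu\|^2$ only on the \emph{balanced} slice $F(\nu)=2/q$; it says nothing about $\nu$ with $\nu_1(x)\ne 1/q$. Indeed, the paper's logic runs in the opposite order: Lemma~\ref{lemma:Balance} is proved first (by a different method) precisely so that one can afterwards restrict to near-balanced types and then invoke Achlioptas--Naor. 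Your ``average-over-$\gamma$ device'' is not something that appears in the second-moment/variational analysis and does not produce the null set~$\Gamma$ in the way you describe.

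The paper instead uses an interpolation in~$\gamma$. Writing $\alpha(\gamma)=N^{-1}\E\log Z$, the Poisson derivative formula and the identity $Z(G\cup(ij))=Z(G)\,[1-\bar\eps\,\prob\{X_i=X_j|G\}]$ give $\alpha'(\gamma)=N^{-2}\sum_{ij}\E\log\{1-\bar\eps\,\prob\{X_i=X_j|G_N\}\}$, and Jensen yields $\alpha(\gamma)\le\log q+\int_0^\gamma\log\{1-\bar\eps\,\E[A(\gamma')]\}\,\de\gamma'$ with $A=\sum_x(\sum_y\nu(x,y))^2$. Lemma~\ref{lemma:ColoringPartFun} gives the matching lower bound $\alpha(\gamma)\ge\log q+\gamma\log(1-\bar\eps/q)+o(1)$. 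Since $A\ge 1/q$ pointwise, the two bounds force the integrand to equal its minimum for Lebesgue-a.e.\ $\gamma'$; this is exactly where the null set~$\Gamma$ comes from. Then $A\le(1+\ve)/q$ whp, and Cauchy--Schwarz converts this into $\sum_x|\nu_1(x)-1/q|\le\ve$. If you want to salvage your strategy, note that a \emph{one}-replica first-moment argument does work cleanly: for the single-replica rate $\phi_1(\pi)=H(\pi)+\bar\gamma\log(1-\bar\eps\sum_x\pi(x)^2)$, both terms are strictly maximized at the uniform $\pi$, so $\sup_{\pi\in B'_\delta}\phi_1(\pi)<\phi_1(\bar\pi)$ for every $\gamma$, and the machinery of Lemma~\ref{lemma:UBProbCol} (with $Z$ in place of $Z^2$) then gives the conclusion---without any exceptional set~$\Gamma$. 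Your two-replica detour both obscures this and forces you onto the unproven global variational claim.
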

\begin{proof}
Recall 
that, if $X$ is a Poisson random variable of mean $\lambda$, 
and $f(\lambda) \equiv \E\, F(X)$, then $f'(\lambda) = \E[F(X+1)-F(X)]$.
Further notice that, if $Z(G)$ is the partition function for the
graph $G$, then 
\begin{eqnarray}
Z(G\cup (ij)) = Z(G)\, \big[1-\bar{\eps}\prob\{X_i=X_j|G\}\,\big]\, .
\end{eqnarray}
Applying these identities to $\alpha(\gamma) \equiv N^{-1}\E\log Z(G_N)$,
we get
\begin{eqnarray}
\frac{\de\alpha(\gamma)}{\de\gamma} = 
\frac{1}{N^2}\sum_{i,j}\E\log\left\{1-\bar{\eps}\prob\{X_i=X_j|G_N\}\right\}\, .
\end{eqnarray}
Since $\alpha(0) = \log q$, by using Jensen inequality we get
\begin{eqnarray}
\alpha(\gamma)\le \log q+\int_0^{\gamma}\log\{1-\bar{\eps}\, \E\,[A(\gamma')]
\}\de\gamma'\, ,\label{eq:SumRule}
\end{eqnarray}
where (the dependence on $\gamma$ being through the 
distribution of $G_N$ and hence of $\uX$)
\begin{eqnarray*}
A(\gamma)\equiv \frac{1}{N^2}\sum_{ij}\ind\{X_i=X_j\} = 
\sum_x\big(\sum_y \nu(x,y)\big)^2\, .
\end{eqnarray*}
On the other hand, for $\gamma<\gamma_q$ 
\begin{eqnarray}
\alpha(\gamma) \ge \log q+\gamma\log(1-\bar{\eps}/q) + O(N^{-1/2})\, .
\label{eq:RealPsi}
\end{eqnarray}
This follows from $Z\ge Z_{\rm b}$ together with Lemma
\ref{lemma:ColoringPartFun}.

From Eq.~(\ref{eq:SumRule}) and  (\ref{eq:RealPsi}), and since 
$A(\gamma)\ge 1/q$ by definition, we get $A(\gamma)\le (1+\ve)/q $
with high probability for any $\gamma\in [0,\gamma_{q})\setminus \Gamma$, where 
$\Gamma$ has zero Lebesgue measure. Finally by Cauchy-Schwarz
\begin{eqnarray*}
\sum_x\Big|\sum_y\nu(x,y)-\frac{1}{q}\Big|
\le q\, \sum_x\Big[\sum_y\nu(x,y)-\frac{1}{q}\Big]^2\le\\
\le q\, A(\gamma)-1\le \ve\, ,
\end{eqnarray*}
where the last inequality holds with high probability by the above.
\end{proof}

\begin{lemma}\label{lemma:UBProbCol}
Let $\cA\subset\Rep$ be any subset of the set of two replicas types,
and assume $\sup_{\nu\in \cA}\phi(\nu)<\phi(\onu)$. Then 
$\nu\not\in\cA$ with high probability.
\end{lemma}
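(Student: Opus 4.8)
The plan is to prove Lemma \ref{lemma:UBProbCol} by a first-moment (union) bound over the discretized set of two-replica types, combining Lemma \ref{lemma:2ndColoring} with the lower bound on the balanced partition function from Lemma \ref{lemma:ColoringPartFun}. The key observation is that the probability that the two-replica type $\nu$ of two independent samples $\uX^{(1)}$, $\uX^{(2)}$ from $\prob\{\,\cdot\,|G_N\}$ lies in $\cA$ can be written, for a fixed graph $G_N$ with $M$ edges, as $\prob\{\nu\in\cA\,|\,G_N\} = \sum_{\nu\in\cA\cap\Rep_N} Z_2(\nu)/Z^2$, since $Z_2(\nu)$ counts (weighted) pairs of colorings with replica type $\nu$ and $Z^2$ is the total weight of all pairs.

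First I would condition on the high-probability event $M = \bar\gamma N$ (more precisely $M$ within $O(\sqrt N)$ of $\gamma N$; since $M$ is Poisson of mean $\gamma N$ this costs only an $o(1)$ additive term, and one can absorb the fluctuation of $\bar\gamma$ around $\gamma$ by noting $\phi$ is continuous in $\gamma$ and $\cA$ is bounded away from $\onu$ in $\phi$-value). Then by Lemma \ref{lemma:2ndColoring}, $\E[Z_2(\nu)\,|\,M=\bar\gamma N]\le K e^{N\phi(\nu)}\le K e^{N\sup_{\cA}\phi}$. The number of valid types in $\Rep_N$ is polynomial in $N$ (at most $(N+1)^{q^2}$), so by Markov's inequality and a union bound, $\sum_{\nu\in\cA\cap\Rep_N} Z_2(\nu)\le (N+1)^{q^2} K e^{N(\sup_{\cA}\phi + \xi)}$ with high probability, for any $\xi>0$. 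On the other hand, $Z^2\ge Z_{\rm b}^2\ge e^{N[\phi(\onu)-\xi]}$ with high probability by Lemma \ref{lemma:ColoringPartFun}. Choosing $\xi$ small enough that $\sup_{\cA}\phi + 3\xi < \phi(\onu)$ — possible precisely because $\sup_{\nu\in\cA}\phi(\nu)<\phi(\onu)$ is a strict inequality — the ratio $\prob\{\nu\in\cA\,|\,G_N\}$ is bounded by $(N+1)^{q^2} K e^{-N\xi}\to 0$ with high probability, which gives $\prob\{\nu\in\cA\}\to 0$ and hence the claim.

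The main obstacle I anticipate is the interplay between the two sources of randomness: the bound from Lemma \ref{lemma:2ndColoring} is an expectation over the graph of $Z_2(\nu)$, while the bound from Lemma \ref{lemma:ColoringPartFun} is a high-probability statement about $Z_{\rm b}$, and these need to be combined carefully since we want a statement about the conditional probability $\prob\{\nu\in\cA\,|\,G_N\}$ for a \emph{typical} $G_N$. The clean way is: let $\mathcal{E}_N$ be the (high-probability) event that $Z_{\rm b}\ge e^{N[\phi(\onu)-\xi]/2}$ and that $M=\gamma N + O(\sqrt N)$; on $\mathcal{E}_N$ we use the deterministic identity $\prob\{\nu\in\cA\,|\,G_N\}\le Z_{\rm b}^{-2}\sum_{\nu\in\cA\cap\Rep_N}Z_2(\nu)$, then take expectations of the right-hand side over $G_N$ restricted to $\mathcal{E}_N$ (which only helps), apply Lemma \ref{lemma:2ndColoring} term by term, and conclude by Markov that the conditional probability is $o(1)$ with high probability. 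One must also check that $\phi$ is bounded on the simplex of types (it is, being a sum of a bounded entropy term and a bounded logarithm term, since the argument of the log is bounded away from zero for $\eps>0$) so that $\sup_{\cA}\phi$ is finite and the discretization error in replacing $\cA$ by $\cA\cap\Rep_N$ is controlled by continuity of $\phi$; these are the routine points I would not belabor.
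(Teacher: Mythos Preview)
Your proposal is correct and follows essentially the same route as the paper: define the high-probability event $\Typ$ (your $\mathcal{E}_N$) on which $Z\ge Z_{\rm b}\ge e^{N[\phi(\onu)-\xi]/2}$ and $M$ is within $o(N)$ of $\gamma N$, bound $\prob\{\nu\in\cA\,|\,G_N\}$ on this event by $e^{-N[\phi(\onu)-\xi]}\sum_{\nu\in\cA\cap\Rep_N}Z_2(\nu)$, take expectation and apply Lemma~\ref{lemma:2ndColoring}, and conclude using the polynomial bound on $|\Rep_N|$ and the strict gap $\phi(\onu)-\sup_{\cA}\phi>0$. The only cosmetic difference is that the paper stops one step earlier than your final ``conclude by Markov'': since $\prob\{\nu\in\cA\}=\E\big[\prob\{\nu\in\cA\,|\,G_N\}\big]\le \E\big[\prob\{\nu\in\cA\,|\,G_N\}\ind_{\mathcal{E}_N}\big]+\prob(\mathcal{E}_N^c)$, the expectation bound already gives the unconditional $o(1)$ directly, so the extra Markov step is harmless but unnecessary.
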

\begin{proof}
Fix $\xi>0$ and denote by $\Typ$ the set of graphs such that 
 $Z\ge Z_{\rm b}>e^{N[\phi(\onu)-\xi]/2}$, and that the number of edges $M$
satisfies $|M-N\gamma|\le o(N)$. 
For any $G_N\in\Typ$, we have (denoting, with an abuse of notation,
the two replica type of $\ux^{(1)}$ and $\ux^{(2)}$ by $\nu$ as well)
\begin{align*}
\prob&\{\nu\in \cA| G_N\}
=\frac{1}{Z^2}\sum_{\ux^{(1)},\ux^{(2)}} \eps^{ U(\ux^{(1)})+U(\ux^{(2)})}
\,\ind(\nu\in\cA)\,\\
&\le e^{-N[\phi(\onu)-\xi]}\sum_{\ux^{(1)},\ux^{(2)}} 
\eps^{ U(\ux^{(1)})+U(\ux^{(2)})}\,
\ind(\nu\in\cA)\, .
\end{align*}
On the other hand, it follows from Lemma  \ref{lemma:ColoringPartFun} that 
$\prob\{\nu\in\cA\}\le \E\{\prob\{\nu\in\cA|G_N\}\,\ind_{G_N\in\Typ}\}+o_N(1)$.
Using Lemma \ref{lemma:2ndColoring}, this implies
\begin{align*}
\prob&\{\nu\in \cA\} \le Ke^{N\xi}\!\!\!\sum_{\nu\in\Rep_N\cap\cA}\!\!\!\!
e^{-N[\phi(\onu)-\phi(\nu)+o_N(1)]}+o_N(1)\, ,
\end{align*}
where the $o(1)$ term in the exponent accounts for the fact that 
$M = N[\gamma+o(1)]$ (as $\phi(\nu)$ is continuous in $\ogamma$). 
The thesis follows by choosing $\xi = \inf_{\nu\in\cA}[\phi(\onu)-\phi(\nu)]
/2>0$ and noting that the number of terms in the sum is at most
$|\Rep_N|=O(N^q)$.
\end{proof}

\begin{proof}[Proof of Theorem \ref{thm:Colorings}]
The quantity appearing in the expectation
in Eq.~(\ref{eqn:graph<->tree}) is upper  bounded by 
$3\max_{x,y}|\nu(x,y)-\onu(x,y)|$.
We will prove that, for $\gamma\in[0,\gamma_q)\setminus\Gamma$,
for any $\delta>0$, and any $x,y\in\cX$,
$|\nu(x,y)-\onu(x,y)|\le \delta$ with high probability, which implies 
the sufficient condition in Theorem \ref{thm:graph<->tree}. 

Notice that $F(\nu)\ge 2/q$ with $F(\nu) = 2/q$ if and only if 
$\sum_y\nu(x,y)=1/q$.
Because of Lemma  \ref{lemma:Balance},
$F(\nu)\le q^{-1}+\delta'$ with high probability 
for any $\delta'>0$ (to be fixed below). The thesis follows by 
applying Lemma \ref{lemma:UBProbCol} to the event 
$\cA = \{ |\nu(x,y)-\onu(x,y)|> \delta;\, F(\nu)\le q^{-1}+\delta' \}$,
thus showing that $\nu\not\in\cA$ with high probability and
hence $|\nu(x,y)-\onu(x,y)|\le \delta$.

We are left with the task of checking the hypothesis of
Lemma \ref{lemma:UBProbCol}, namely  $\sup_{\nu\in \cA}\phi(\nu)<\phi(\onu)$.
Achlioptas and Naor proved that,
if $F(\nu) = q^{-1}$ (i.e. the column and row sums of $\nu$ are all equal),
and $\gamma<\gamma_q$, then $\phi(\nu)\le \phi(\onu)-A'||\nu-\onu||_{\sTV}^2$
for some $A'>0$ (see \cite{AchlioptasNaor}, Theorem 7 and discussion below).
Under the condition $||\nu-\onu||_{\sTV}\ge |\nu(x,y)-\onu(x,y)|\ge \delta$
$\phi(\nu)< \phi(\onu)$, always subject to $F(\nu) = q^{-1}$.
But by continuity of $F(\nu)$ and $\psi(\nu)$, we can chose
$\delta'$ small enough such that $\phi(\nu)< \phi(\onu)$
for $F(\nu)\le q^{-1}+\delta'$ as well.
\end{proof}
%
%
\vspace{-0.25cm}

\Section{The case of the Ising ferromagnet}
\label{sec:IsingFerro}

We now set out to demonstrate a counter-example to the graph-tree 
reconstruction equivalence encountered above: 
the reconstruction threshold for the random 
$(k+1)$-regular Ising ferromagnet is $k(1-2\eps)=1$
(Theorem \ref{thm:ferromagnet}). 
It is convenient to use a symmetric notation by letting 
$\theta\equiv 1-2\eps>0$, and to generalize the model introducing 
a second parameter $\lambda\ge 0$ (corresponding to a `magnetic
field' in the physics terminology). We then let 
$\psi(+,+) = (1+\lambda)(1+\theta)$, $\psi(-,-) = (1-\lambda)(1+\theta)$,
and  $\psi(+,-) = \psi(-,+) = (1-\theta)$. The original problem
is recovered by letting $\lambda=0$. In terms of these parameters the 
distribution of $\uX$ reads
\begin{align}
\prob\{\uX=\ux|G_N\} = \frac{1}{Z_{\theta,\lambda}}
\theta_+^{e_=(\ux)}
\theta_-^{e_{\neq}(\ux)}\lambda_+^{n_+(\ux)}\lambda_-^{n_-(\ux)}\, ,
\label{eq:IsingExplicit}
\end{align}
whereby $\theta_\pm \equiv 1\pm \theta$, $\lambda_\pm\equiv 1\pm\lambda$,
$n_\pm(\ux)$ is the number of vertices with $x_i=\pm$, and
$e_=(\ux)$ (respectively $e_{\neq}(\ux)$) denotes the number of edges 
$(ij)$ with $x_i=x_j$ (respectively $x_i\neq x_j$).

A crucial role is played by of the partition function $Z_{\theta,\lambda}$
(defined by the normalization condition of $\prob\{\,\cdot\,|G_N\}$)
as well as the constrained partition functions
\begin{align}
\Zh_{\theta,M}& =\!\!\! \sum_{n_+(\ux)-n_-(\ux)=NM}\!\!\!
\theta_+^{e_=(\ux)}\theta_-^{e_{\neq}(\ux)}\, .\label{eq:MagnPartFun}
\end{align}
The rationale for introducing  $\Zh_{\theta,M}$ is that it allows to 
estimate the distribution of the number of $+$'s (or $-$'s)
through the identity (valid for $\lambda=0$)
$\prob\{n_+(\uX)-n_-(\uX) = NM|G_N\} = \Zh_{\theta,M}/Z_{\theta,0}$.

The first technical tool is a well known tree calculation.
\begin{lemma}\label{lmm:recursion}
Assume $\Tree$ to be a regular tree with branching $k$
and depth $t$, rooted at $r$, let $L$ be its leaves,
and let $\prob\{\uX=\ux|\Tree\}$ be defined as in Eq.~(\ref{eq:IsingExplicit})
whereby $n_{\pm}(\ux)$ does not count variables in $\ux_L$. 
For $h_0\in[-1, +1]$,
let $F_{h_0}(\ux_L)$ be the law of $|L|$ iid Bernoulli variables of parameter
$(1+h_0)/2$, and
define $\prob_{h_0}\{\uX=\ux|\Tree\}\equiv 
\prob\{\uX=\ux|\Tree\}\, F_{h_0}(\ux_L)/C$ (with $C$ a normalization constant).

Then $\prob_{h_0}\{X_i=\pm|\Tree\} = (1\pm h_t)/2$,
where $h_t\equiv f_{\theta,\lambda}^{\circ t}(h_0)$ 
($f_{\theta,\lambda}^{\circ t}$ being the $t$-fold composition of 
$f_{\theta,\lambda}$) and
\begin{eqnarray}
f_{\theta,\lambda}(h) \equiv \frac{(1+\lambda)(1+\theta h)^k-(1-\lambda)(1-\theta 
h)^k}{(1+\lambda)(1+\theta h)^k+(1-\lambda)(1-\theta 
h)^k}\, .\label{eq:fdef}
\end{eqnarray}
\end{lemma}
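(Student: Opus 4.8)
The plan is to prove the formula by induction on the depth $t$ of the tree, exploiting the factorization of the measure $\prob_{h_0}\{\,\cdot\,|\Tree\}$ over the subtrees hanging from the root. First I would set up notation: write $r$ for the root, $1,\dots,k$ for its children, and $\Tree^{(1)},\dots,\Tree^{(k)}$ for the corresponding subtrees, each of depth $t-1$, rooted respectively at the children; each child inherits the same branching $k$, and the leaves of the $\Tree^{(a)}$ together make up $L$. The key structural observation is that, conditionally on $X_r = x_r$, the restrictions of $\uX$ to the different subtrees are independent (this is the Markov property used informally in the Previous results section), and moreover the measure on each subtree, after conditioning, is exactly of the form treated by the lemma at depth $t-1$ but with a modified `magnetic field' parameter at the new root coming from the edge weight $\psi(x_r,\,\cdot\,)$. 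The Bernoulli tilt $F_{h_0}$ at the leaves factorizes over subtrees as well, so $\prob_{h_0}\{\,\cdot\,|\Tree\}$ is, up to normalization, a product over $a=1,\dots,k$ of tilted subtree measures times the local factors $\lambda_+^{n_+},\lambda_-^{n_-}$ at the root.

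The inductive step then amounts to a one-step computation. Define $h_s = f_{\theta,\lambda}^{\circ s}(h_0)$ as in the statement. Suppose that for each child $a$ the marginal of $X_a$ under the depth-$(t-1)$ tilted subtree measure (without the incoming edge to $r$) is $(1\pm h_{t-1})/2$; this is the induction hypothesis (the base case $t=0$ being immediate, since then $L=\{r\}$ and $\prob_{h_0}\{X_r=\pm\} = (1\pm h_0)/2$ by definition of $F_{h_0}$). Writing out
\begin{eqnarray*}
\prob_{h_0}\{X_r=+|\Tree\} \propto \lambda_+ \prod_{a=1}^k \Big(\theta_+\tfrac{1+h_{t-1}}{2}+\theta_-\tfrac{1-h_{t-1}}{2}\Big)
\end{eqnarray*}
and the analogous expression for $X_r=-$ with $\lambda_+\to\lambda_-$ and $\theta_\pm$ swapped inside the product, one divides to get $\prob_{h_0}\{X_r=+|\Tree\} = (1+h_t)/2$ with $1+\theta h_{t-1}$ and $1-\theta h_{t-1}$ emerging from $\theta_+(1+h_{t-1})/2+\theta_-(1-h_{t-1})/2 = 1+\theta h_{t-1}$ (using $\theta_\pm = 1\pm\theta$), and likewise for the minus sign; substituting $\lambda_\pm = 1\pm\lambda$ reproduces exactly the map $f_{\theta,\lambda}$ of Eq.~(\ref{eq:fdef}). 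Hence $h_t = f_{\theta,\lambda}(h_{t-1}) = f_{\theta,\lambda}^{\circ t}(h_0)$, closing the induction.

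The only genuinely delicate point is bookkeeping: one must be careful that the convention `$n_\pm(\ux)$ does not count variables in $\ux_L$' is consistent under restriction to subtrees — i.e., that when we peel off the root, the leaves of each $\Tree^{(a)}$ are still exactly the excluded vertices at the next level — and that the edge from $r$ to a child $a$ contributes a factor $\theta_+$ or $\theta_-$ that gets absorbed correctly, either into the recursion for the child's effective field or into the root computation, but not double-counted. Once the factorization is stated cleanly this is routine, so I expect no real obstacle; the content of the lemma is entirely the algebraic identity $\theta_+(1+h)/2+\theta_-(1-h)/2 = 1+\theta h$ together with the independence-across-subtrees property of tree-structured Gibbs measures.
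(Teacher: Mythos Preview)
Your proposal is correct and is exactly the standard derivation. The paper does not give a proof of this lemma at all, introducing it only as ``a well known tree calculation''; your induction on depth with the one-step identity $\theta_+(1+h)/2+\theta_-(1-h)/2=1+\theta h$ is precisely what is meant, and your remark about the leaf-counting convention being stable under passage to subtrees is the only point requiring care.
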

\begin{lemma}\label{lemma:fproperties}
For any $\lambda,\ve>0$, 
$|f_{\theta,\lambda}^{\circ t}(+1)-f_{\theta,\lambda}^{\circ t}(0)|\le \ve$
for $t$ large enough. Further, for $k\theta\le 1$ 
and any $h\in[-1,+1]$ $|f_{\theta,0}^{\circ t}(h)|\le \ve$ for $t$ large enough.
\end{lemma}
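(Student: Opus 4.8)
The plan is to analyze the map $f_{\theta,\lambda}$ defined in Eq.~(\ref{eq:fdef}) as a smooth map of the interval $[-1,+1]$ into itself, and to establish the two claims by a fixed-point / contraction argument. First I would record the basic structural facts: $f_{\theta,\lambda}$ is continuous on $[-1,+1]$, it maps $[-1,+1]$ into its interior when $\lambda,\theta\in(0,1)$ (since both $(1\pm\theta h)^k$ stay strictly positive), and, crucially, it is monotone increasing in $h$ — this follows by differentiating, or more cleanly by writing $f_{\theta,\lambda}(h)=\tanh\!\big(\lambda'+ k\,\atanh(\text{something})\big)$-type manipulations, but the cleanest route is just to check $f'_{\theta,\lambda}(h)>0$ directly from the quotient. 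Monotonicity plus the fact that $f$ maps into a compact subinterval gives, by the standard argument for monotone interval maps, that the iterates $f^{\circ t}_{\theta,\lambda}(h)$ converge for every starting point $h$; and since the orbit of any $h\in[-1,+1]$ is eventually trapped between the orbits of the two extreme points, the limit is the same for all $h$ that lie between two points with a common limit.

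For the first claim (with $\lambda>0$ fixed), I would argue that $f_{\theta,\lambda}$ has a \emph{unique} fixed point $h^\star\in(-1,1)$, with $h^\star>0$ because $f_{\theta,\lambda}(0)=\lambda>0$ and $f_{\theta,\lambda}(1)<1$. Uniqueness in $(0,1)$ can be obtained from the convexity/log-derivative structure of $f$ (the function $h\mapsto f_{\theta,\lambda}(h)-h$ changes sign only once on $(0,1)$), or from the fact that $f$ is a concave increasing function on $[0,1]$ with $f(0)>0$. Since $f^{\circ t}(0)$ is an increasing sequence bounded above by $h^\star$ and $f^{\circ t}(1)$ is a decreasing sequence bounded below by $h^\star$ (both by monotonicity of $f$ and the position of the fixed point), both converge to $h^\star$, hence $|f^{\circ t}_{\theta,\lambda}(+1)-f^{\circ t}_{\theta,\lambda}(0)|\to 0$, which gives the bound $\le\ve$ for $t$ large.

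For the second claim ($\lambda=0$, $k\theta\le 1$), note $f_{\theta,0}$ is odd and increasing with $f_{\theta,0}(0)=0$ and $f'_{\theta,0}(0)=k\theta\le 1$; moreover $f_{\theta,0}$ is concave on $[0,1]$, so $f_{\theta,0}(h)\le k\theta\, h\le h$ for $h\in[0,1]$, with the inequality strict for $h\in(0,1]$ when $k\theta<1$, and even when $k\theta=1$ strict concavity forces $f_{\theta,0}(h)<h$ for $h>0$. Hence for $h\ge 0$ the sequence $f^{\circ t}_{\theta,0}(h)$ is nonincreasing and bounded below by $0$, so it converges to a fixed point in $[0,h)$, which must be $0$ (the only fixed point in $[0,1]$); by oddness the same holds for $h\le 0$. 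Since $f^{\circ t}_{\theta,0}$ is monotone, $|f^{\circ t}_{\theta,0}(h)|\le f^{\circ t}_{\theta,0}(1)\to 0$ uniformly in $h\in[-1,+1]$, giving the claim.

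The main obstacle is the rigorous justification of the concavity / uniqueness-of-fixed-point facts for $f_{\theta,\lambda}$ on $[0,1]$: everything else is a soft monotone-dynamics argument, but to pin down that the attracting fixed point is unique (so that \emph{all} orbits converge to it, not just that they converge) one genuinely needs the sign of $f''_{\theta,\lambda}$ on $[0,1]$, or an equivalent convexity statement. I expect this to be a short but slightly fiddly calculus computation — writing $f_{\theta,\lambda}(h) = \frac{g(h)-g(-h)}{g(h)+g(-h)}$ with $g(h)=(1+\lambda)(1+\theta h)^k$ rewritten as $(1+\lambda)e^{k\log(1+\theta h)}$, so that $f$ becomes a hyperbolic tangent of a concave function of $h$, from which concavity of $f$ on $[0,1]$ follows because $\tanh$ is increasing and concave on $[0,\infty)$ and is being composed with a concave increasing argument. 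Once that is in hand, both parts follow from the monotone-interval-map dichotomy.
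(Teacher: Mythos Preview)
The paper states this lemma without proof, so your argument must stand on its own. The overall plan is correct: monotonicity of $f_{\theta,\lambda}$ together with concavity on $[0,1]$ forces a unique fixed point in $[0,1]$ (for $\lambda>0$) or only the fixed point $0$ (for $\lambda=0$, $k\theta\le 1$), and then the monotone--interval--map argument you describe finishes both claims.

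The gap is in your justification of concavity. You write that $f$ is ``a hyperbolic tangent of a concave function of $h$'', but in the representation $f_{\theta,\lambda}(h)=\tanh\!\big(\atanh\lambda+k\,\atanh(\theta h)\big)$ the inner function is \emph{convex} on $[0,1]$, since $\frac{d^2}{dh^2}\atanh(\theta h)=2\theta^3 h/(1-\theta^2 h^2)^2>0$ there. Composing the concave increasing $\tanh$ with a convex increasing function yields no sign information on $f''$, so the argument as written does not go through. The concavity of $f_{\theta,\lambda}$ on $[0,1]$ is nonetheless true, but needs a direct computation: from $f'(h)=k\theta(1-f^2)/(1-\theta^2h^2)$ one obtains
\[
f''(h)\;=\;\frac{2k\theta^2\,(1-f(h)^2)}{(1-\theta^2 h^2)^2}\,\big(\theta h-kf(h)\big)\,,
\]
and for $h\ge 0$, $\lambda\ge 0$, $k\ge 1$ one has $kf(h)=k\tanh\!\big(\atanh\lambda+k\,\atanh(\theta h)\big)\ge \tanh\!\big(\atanh(\theta h)\big)=\theta h$, hence $f''\le 0$ on $[0,1]$. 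With this correction in place, your fixed-point arguments for both parts are sound.
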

The following estimate of $\Zh_{\theta,M}$ is a standard exercise in combinatorics, whose proof we omit.
\begin{lemma}\label{lmm:Zmagnetized}
There exist $C,D>0$ independent of $N$ such that 
$\E \, \Zh_{\theta,M} \le C\, 2^{N}\, e^{DN\,M^2}$.
\end{lemma}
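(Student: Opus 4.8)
The plan is to bound $\E\,\Zh_{\theta,M}$ by comparing the sum over colorings with fixed magnetization against the easiest possible edge contribution. First I would rewrite the expectation using linearity over the random regular (configuration-model) graph: for a fixed configuration $\ux$ with $n_+(\ux)-n_-(\ux)=NM$, I need $\E\big[\theta_+^{e_=(\ux)}\theta_-^{e_{\neq}(\ux)}\big]$, where the randomness is in the pairing of the $(k+1)N$ half-edges. Since $\theta_+=1+\theta\le 2$ and $\theta_-=1-\theta\le 1$, and there are exactly $(k+1)N/2$ edges, one has the crude deterministic bound $\theta_+^{e_=(\ux)}\theta_-^{e_{\neq}(\ux)}\le 2^{(k+1)N/2}$ for every realization, hence $\E\,\Zh_{\theta,M}\le 2^{(k+1)N/2}\,\#\{\ux:\,n_+-n_-=NM\}$. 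The number of such configurations is $\binom{N}{(1+M)N/2}\le 2^{N}e^{-N\,I(M)}$ for a suitable rate function, but it is even simpler to use $\binom{N}{(1+M)N/2}\le 2^{N}$ outright; combined with the edge factor this already gives a bound of the form $C'\,2^{(k+3)N/2}$, which is $\le C\,2^N e^{DNM^2}$ for any $M$ once $D$ absorbs the constant $(k+1)/2$ via $e^{DNM^2}\ge 1$.

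If one wants the $M^2$ dependence to be genuinely used (i.e.\ a bound that decays for large $|M|$, which is the version actually needed downstream), I would instead keep the binomial coefficient sharp: $\binom{N}{(1+M)N/2}\le \exp\{N\log 2 - \tfrac{N M^2}{2}(1+o(1))\}$ by the standard entropy estimate $\log\binom{N}{(1+M)N/2}= N\log 2 - \tfrac12 N M^2 + O(M^4 N)+O(\log N)$, and then bound the edge weight more carefully. Writing $e_{\neq}(\ux)\ge 0$ and $e_=(\ux)\le (k+1)N/2$, the factor $\theta_+^{e_=}\theta_-^{e_{\neq}}\le \theta_+^{(k+1)N/2}$ is again deterministic, so $\E\,\Zh_{\theta,M}\le \theta_+^{(k+1)N/2}\binom{N}{(1+M)N/2}\le C\,2^N\,\theta_+^{(k+1)N/2}\,e^{-NM^2/2+O(NM^4)}$. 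Absorbing $\theta_+^{(k+1)N/2}\le 2^{(k+1)N/2}$ is not allowed if we insist the prefactor be exactly $2^N$; instead I would note $e^{DNM^2}$ with $D$ large enough dominates any fixed constant-to-the-$N$ factor only for $M$ bounded away from $0$, so for the clean statement it is best to keep the crude version. For the paper's purposes the crude bound $\E\,\Zh_{\theta,M}\le C\,2^N e^{DNM^2}$ with $D=D(k,\theta)$ large suffices, and I would take $D$ to be, say, $(k+1)\log 2$ so that $2^N e^{DNM^2}\ge 2^N \cdot 2^{(k+1)N M^2}$ — but since $|M|\le 1$ this is not automatically $\ge 2^{(k+3)N/2}$; the correct fix is simply to state $\E\,\Zh_{\theta,M}\le C\,2^{(k+2)N}$ and observe this is of the claimed form with any $D>0$ since the left side does not grow with $M$.

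The only mild obstacle is cosmetic: making the constants in front of $N$ come out as literally $2^N$ rather than $2^{cN}$. The honest route is to not throw away the edge expectation. Since each edge is monochromatic with probability roughly $n_+^2 + n_-^2$ over $N^2$ (in the configuration model the half-edge pairing gives, to leading order, $\prob\{\text{edge monochromatic}\}\approx \tfrac{(1+M)^2+(1-M)^2}{4}=\tfrac{1+M^2}{2}$ per edge, with weak dependence across edges), a second-moment-free computation gives $\E\big[\theta_+^{e_=}\theta_-^{e_{\neq}}\big]\le \big(\tfrac{1+M^2}{2}\theta_+ + \tfrac{1-M^2}{2}\theta_-\big)^{(k+1)N/2}(1+o(1))=\big(1+\theta M^2\big)^{(k+1)N/2}(1+o(1))\le e^{(k+1)N\theta M^2/2}(1+o(1))$, using $\theta_++\theta_-=2$. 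Multiplying by $\binom{N}{(1+M)N/2}\le 2^N$ yields exactly $\E\,\Zh_{\theta,M}\le C\,2^N e^{DNM^2}$ with $D=(k+1)\theta/2$, which is the stated form. The one step requiring a little care — and the part I would expect to be the main technical point — is justifying that the edge events in the configuration model are sufficiently close to independent that the product bound $\prod_{\text{edges}}\E[\cdot]$ (up to a $1+o(1)$ factor, or an explicit constant $C$) is valid; this is standard for the configuration model (the dependence is $O(1/N)$ per pair and there are $O(N)$ edges), and it is exactly why the statement is flagged as "a standard exercise in combinatorics."
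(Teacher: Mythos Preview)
The paper omits the proof, calling it ``a standard exercise in combinatorics,'' so there is no detailed argument to compare against. Your final paragraph lands on the correct and standard route: for a fixed $\ux$ with magnetization $M$, the configuration-model expectation $\E\big[\theta_+^{e_=(\ux)}\theta_-^{e_{\neq}(\ux)}\big]$ equals $(1+\theta M^2)^{(k+1)N/2}$ up to a bounded multiplicative factor, and multiplying by $\binom{N}{(1+M)N/2}\le 2^N$ gives the lemma with $D=(k+1)\theta/2$. The near-independence you flag can be made precise by the explicit matching count (choose the $e_{\neq}$ bichromatic half-edge pairs, match the two monochromatic remainders, and apply Stirling); the correction is indeed $O(1)$.

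Your first two paragraphs, however, contain a real error that matters. You repeatedly assert that a crude bound of size $2^{(k+3)N/2}$ or $C\,2^{(k+2)N}$ ``is of the claimed form $C\,2^N e^{DNM^2}$'' with $C,D$ independent of $N$. It is not: at $M=0$ the right-hand side is just $C\cdot 2^N$, which is exponentially smaller than $2^{(k+3)N/2}$. The entire content of the lemma is that the bound is essentially $2^N$ near $M=0$; this is exactly what the proof of Theorem~\ref{thm:ferromagnet} uses when it compares $\sum_{|M|\le\delta}\E\Zh_{\theta,M}\le CN\,2^N e^{DN\delta^2}$ against $Z_{\theta,0}\ge e^{N[\varphi(\theta,0)-\xi]}$ via $\varphi(\theta,0)>\log 2$. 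Any bound that loses an exponential-in-$N$ factor at $M=0$ is useless there. So the ``honest route'' you take at the end is not a cosmetic refinement---it is the only argument that proves the lemma as stated, and the earlier crude versions should be discarded rather than presented as alternatives.
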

\begin{lemma}\label{lemma:Energy}
For any $\lambda\ge 0$, let   $h_*$ the unique non-negative
solution of $h_*=f_{\theta,\lambda}(h_*)$ and
define $e_{\theta,\lambda}(h) = (\theta+h^2)/(1+\theta h^2)$. 
Then, for any $\ve>0$, and 
a uniformly random edge $(i,j)\in E$,
$|\E\{X_i X_j|G_N\}-e_{\theta,\lambda}(h_*)|\le \ve$ whp.
\end{lemma}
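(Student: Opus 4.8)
The plan is to compute $\E\{X_iX_j|G_N\}$ for a uniformly random edge $(ij)$ by the same "add an edge" device used in the proof of Lemma \ref{lemma:Balance}, combined with the local-weak-convergence picture (Proposition \ref{prop:tree_convergence}) and the tree recursion of Lemma \ref{lmm:recursion}. First I would use the identity $Z_{\theta,\lambda}(G\cup(ij)) = Z_{\theta,\lambda}(G)\,\big[\theta_+\,\prob\{X_i=X_j|G\}+\theta_-\,\prob\{X_i\neq X_j|G\}\big]$, so that differentiating $\alpha_N(\gamma)\equiv N^{-1}\E\log Z_{\theta,\lambda}(G_N)$ in the edge density expresses $\E\{X_iX_j|G_N\}$ (averaged over a random edge) as a derivative of the free energy, exactly as in Eq.~(\ref{eq:SumRule}). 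Since $e_{\theta,\lambda}(h) = (\theta+h^2)/(1+\theta h^2)$ is precisely the value of $\E\{X_iX_j\}$ for a single edge whose two endpoints carry independent "effective fields" $h$, the target quantity $e_{\theta,\lambda}(h_*)$ is the natural candidate, and the content of the lemma is that the relevant field concentrates at the fixed point $h_*$.

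The cleaner route, which I would actually carry out, is direct and local rather than via the sum rule. Condition on $\Ball(i,t)$; by Proposition \ref{prop:tree_convergence} this neighborhood converges in distribution to the regular tree $\Tree_t$ of branching $k$, and by Proposition \ref{prop:poisson_independance} the residual graph $\cBall(i,t)$ is an independent Poisson graph on the remaining vertices. The effect of $\cBall(i,t)$ on the spins inside $\Ball(i,t)$ is, by the tree structure and Eq.~(\ref{eq:IsingExplicit}), a product boundary field on $\Edge(i,t)$; call the typical value of this field $h_0$. Then Lemma \ref{lmm:recursion} gives that the marginal of each endpoint of a central edge is $(1\pm h_{t'})/2$ with $h_{t'} = f_{\theta,\lambda}^{\circ t'}(h_0)$, and a one-edge computation yields $\E\{X_iX_j|\,\cdot\,\} = e_{\theta,\lambda}(h_{t'})$ up to the correlation between the two endpoint fields, which is negligible because the two subtrees hanging off the edge are (asymptotically) disjoint and the boundary fields on them are independent. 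By Lemma \ref{lemma:fproperties}, $f_{\theta,\lambda}^{\circ t'}$ maps every starting point in $[-1,+1]$ into an arbitrarily small neighborhood of a single limit for $t'$ large, and for $\lambda>0$ (or after taking $\lambda\downarrow 0$ carefully) that limit is the unique non-negative fixed point $h_*$; hence $h_{t'}\to h_*$ uniformly in the boundary condition, and $e_{\theta,\lambda}(h_{t'})\to e_{\theta,\lambda}(h_*)$ since $e_{\theta,\lambda}$ is continuous.

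The remaining point is to turn "the boundary field has typical value $h_0$" into a rigorous whp statement and to control the fluctuations of $\E\{X_iX_j|G_N\}$ around its mean. For concentration I would use that $N^{-1}\sum_{(ij)\in E}\E\{X_iX_j|G_N\}$ equals $\tfrac{\de}{\de\gamma}\alpha_N(\gamma)+o(1)$, that $\alpha_N(\gamma)$ concentrates (bounded-difference / Azuma on the Poisson edge variables, as is standard and as invoked around Lemma \ref{lemma:ColoringPartFun}), and that the summand is a bounded function of a local neighborhood, so by the local convergence of Proposition \ref{prop:tree_convergence} its average converges to the tree expectation $e_{\theta,\lambda}(h_*)$; this simultaneously identifies the limit and gives the whp bound for a uniformly random edge. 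The main obstacle is exactly the identification of the limiting boundary field with $h_*$ rather than with the "wrong" fixed point $0$: for $\lambda=0$ and $k\theta\le 1$ the recursion contracts to $0$ (second part of Lemma \ref{lemma:fproperties}), consistent with $h_*=0$, while for $k\theta>1$ one must rule out that the boundary looks unbiased — here I would feed in the information that $\Zh_{\theta,M}$ (Lemma \ref{lmm:Zmagnetized}) forces $n_+-n_-$ to concentrate on $\pm NM_*$ with $M_*>0$, equivalently that the bulk magnetization is $h_*$, so the conditional boundary field on $\Edge(i,t)$ is biased to $h_*$ (up to sign, which is symmetrized away in $X_iX_j$). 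Making this "bulk magnetization $\Rightarrow$ boundary field" step precise, uniformly in $t$ and in the realization of $\cBall(i,t)$, is the delicate part of the argument.
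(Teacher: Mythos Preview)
Your proposal misses the key tool that makes the paper's argument short: Griffiths inequalities. Because the model is a ferromagnetic Ising model, $\E\{X_iX_j|G_N\}$ is monotone in the graph (adding edges increases it) and in the boundary values. The paper simply sandwiches $\E\{X_iX_j|G_N\}$ between the free-boundary value on the tree $\Ball=\Ball(i,t)\cup\Ball(j,t)$ and the all-$+1$ boundary value on the same tree; these are $e_{\theta,\lambda}(f_{\theta,\lambda}^{\circ t}(0))$ and $e_{\theta,\lambda}(f_{\theta,\lambda}^{\circ t}(+1))$ respectively, and Lemma~\ref{lemma:fproperties} (for $\lambda>0$) makes them $\ve$-close for $t$ large. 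No identification of the actual boundary field is needed.

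Your route, by contrast, tries to pin down the boundary field itself, and the step you flag as ``the delicate part'' is not just delicate but circular as written. You propose to deduce that the bulk magnetization concentrates at $\pm h_*$ from Lemma~\ref{lmm:Zmagnetized}; but Lemma~\ref{lmm:Zmagnetized} is only an upper bound on $\E\,\Zh_{\theta,M}$ and says nothing about where the mass sits until it is combined with a \emph{lower} bound on $Z_{\theta,0}$, which in the paper comes from Lemma~\ref{lmm:Ztotal} --- whose proof relies on the very Lemma~\ref{lemma:Energy} you are trying to establish. Two smaller issues: you invoke Propositions~\ref{prop:tree_convergence} and~\ref{prop:poisson_independance}, which are stated for the Poisson ensemble, whereas the ferromagnet here lives on random \emph{regular} graphs; and Lemma~\ref{lemma:fproperties} does \emph{not} say that $f_{\theta,0}^{\circ t}$ contracts to a single point when $k\theta>1$ (it does not --- there are three fixed points), so your appeal to it for the $\lambda=0$, $k\theta>1$ case is unjustified. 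The Griffiths-inequality sandwich is what bypasses all of this.
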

\begin{proof}
Let $h_{t,+} = f_{\theta,\lambda}^{\circ t}(+1)$ and 
$h_{t,0} = f_{\theta,\lambda}^{\circ t}(0)$.
Since $e_{\theta,\lambda}(h)$ is continuous
in $h$, and because of Lemma \ref{lemma:fproperties}, we can fix
$t=t(\ve)$ in such a way that $|e_{\theta,\lambda}(h_{t,+})-e_{\theta,\lambda}(h_{t,0})|\le \ve$. We will show  that, whp,
$e_{\theta,\lambda}(h_{t,0})\le
\E\{X_i X_j|G_N\}\le e_{\theta,\lambda}(h_{t,+})$, thus proving the
thesis, since (by monotonicity of $f_{\theta,\lambda}(\,\cdot\,)$ and
 $e_{\theta,\lambda}(\,\cdot\,)$) 
$e_{\theta,\lambda}(h_{t,0})\le
e_{\theta,\lambda}(h_*)\le e_{\theta,\lambda}(h_{t,+})$ as well.

In order to prove our claim, notice that $\Ball = \Ball(i,t)\cup  \Ball(j,t)$
is whp a tree (obtained by joining through their roots two regular trees 
with branching $k$ and depth $t$), and denote by $\Edge$ its leaves. 
Griffiths inequalities imply \cite{Georgii} that $\E\{X_iX_j|G_N\}$
can be lower bounded  by replacing $G_N$ with any subgraph,  
and upper bounded conditioning on $X_k=+1$ for any set of vertices $k$.
In particular we have
\begin{eqnarray*}
\E\{X_iX_j|\Ball\}\le
\E\{X_iX_j|G_N\}\le \E\{X_iX_j|\uX_{\Edge}=\underline{+1},\Ball\}\, ,
\end{eqnarray*} 
where (in the upper bound) we emphasized that, by the Markov property of
$\prob\{\,\cdot\,|G_N\}$, $\uX_{\Ball}$ is conditionally independent of
$G_N\setminus \Ball$, given $\uX_{\Edge}$.

The proof is finished by evaluating the upper and lower bound
under the assumption, mentioned above, that $\Ball$ is a tree.
This can be done through a dynamic programming-type calculation,
which we omit from this abstract. The final result is
$\E\{X_iX_j|\Ball\} = e_{\theta,\lambda}(f^{\circ t}_{\theta,\lambda}(\lambda))
\ge e_{\theta,\lambda}(h_{t,0})$ and 
$\E\{X_iX_j|\uX_{\Edge}=\underline{+1},\Ball\} 
= e_{\theta,\lambda}(h_{t,+})$, which finishes the proof.
\end{proof}
\begin{lemma}\label{lmm:Ztotal}
For any $\lambda\ge 0$, let   $h_*$ be the unique non-negative
solution of $h_*=f_{k,\lambda}(h_*)$ and $\varphi(\theta,\lambda)\equiv
\phi(\theta,\lambda,h_*)$, where
\begin{align*}
\phi(\theta,&\lambda,h)  \equiv 
-\frac{k+1}{2}\log(1+\theta h^2)+\\
&+\log[(1+\lambda)(1+\theta h)^{k+1}+ (1-\lambda)(1-\theta h)^{k+1}]\, .
\end{align*}
Then $(Z_{\theta,\lambda}/e^{N\varphi})\in[e^{-N\ve}, e^{N\ve}]$
whp for any $\ve>0$.
\end{lemma}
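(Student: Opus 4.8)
The plan is to compute $Z_{\theta,\lambda}$ by the standard strategy of decomposing the partition function according to the magnetization $M = (n_+ - n_-)/N$, then combining the upper bound on $\Zh_{\theta,M}$ from Lemma \ref{lmm:Zmagnetized} with a matching lower bound obtained from the value of the magnetization that actually dominates the measure, which Lemma \ref{lemma:Energy} identifies via the fixed point $h_*$. Concretely, one writes
\begin{align*}
Z_{\theta,\lambda} = \sum_{M}\lambda_+^{N(1+M)/2}\lambda_-^{N(1-M)/2}\,\Zh_{\theta,M}\, ,
\end{align*}
the sum running over the $O(N)$ admissible values of $M$, so $\log Z_{\theta,\lambda}$ equals $\max_M\{\tfrac{N(1+M)}{2}\log\lambda_+ + \tfrac{N(1-M)}{2}\log\lambda_- + \log\Zh_{\theta,M}\}$ up to an additive $O(\log N)$, which is absorbed in $e^{\pm N\ve}$.

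For the upper bound I would take expectations and apply Lemma \ref{lmm:Zmagnetized}, giving $\E\,Z_{\theta,\lambda}\le C\sum_M 2^N e^{DNM^2}\lambda_+^{N(1+M)/2}\lambda_-^{N(1-M)/2}$; a Laplace/Chernoff estimate on this explicit sum plus Markov's inequality yields $Z_{\theta,\lambda}\le e^{N(\Phi+\ve)}$ whp, where $\Phi$ is the maximum over $M$ of the exponential rate. The main work is to check that this rate $\Phi$ coincides with $\varphi(\theta,\lambda) = \phi(\theta,\lambda,h_*)$; this is a calculus exercise identifying the optimizing $M$ with the magnetization $h_*=f_{\theta,\lambda}(h_*)$ produced by the tree recursion of Lemma \ref{lmm:recursion}, after which the closed form for $\phi$ drops out of the generating-function computation on the regular tree. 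For the lower bound I would instead restrict the sum over configurations to those with magnetization close to $h_*$ and estimate $\Zh_{\theta,M}$ from below for such $M$: here one cannot use the first-moment bound, so the argument should proceed by a second-moment or concentration argument on $\log\Zh_{\theta,M}$ (for instance via Azuma's inequality applied to the edge-revealing martingale, exploiting that $\log\Zh$ changes by $O(1)$ per edge), combined with the fact — implicit in Lemma \ref{lemma:Energy} and its proof — that the local tree computation gives $\E\,\log\Zh_{\theta,Nh_*}\ge N\varphi(\theta,\lambda) - o(N)$.

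The step I expect to be the main obstacle is the lower bound on $\Zh_{\theta,M}$, i.e. showing $Z_{\theta,\lambda}\ge e^{N(\varphi-\ve)}$ whp: the first-moment method only gives upper bounds, and one needs either a genuine second-moment computation (controlling $\E\,\Zh_{\theta,M}^2$ and invoking the Paley–Zygmund inequality, which in turn requires that the dominant two-replica type be the ``independent'' one — a fact in the same spirit as the hypothesis of Theorem \ref{thm:graph<->tree}) or, more cheaply, an interpolation/concentration argument showing $\frac1N\log Z_{\theta,\lambda}$ concentrates around its mean together with a separate evaluation of that mean via Lemma \ref{lemma:Energy} and the thermodynamic relation $\frac{\de}{\de\theta}\,\E\log Z_{\theta,\lambda}$ expressed through the edge correlation $\E\{X_iX_j|G_N\}$. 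I would pursue the latter route: integrate the identity for $\frac{\de}{\de\theta}\log Z_{\theta,\lambda}$ in $\theta$ using the explicit $e_{\theta,\lambda}(h_*)$ from Lemma \ref{lemma:Energy}, check the boundary term at $\theta=0$ (where $Z_{0,\lambda}=\big(\lambda_+ + \lambda_-\big)^N\cdot(\text{edge factors})=2^N$ up to a trivial factor), and verify the resulting integral equals $\varphi(\theta,\lambda)$ by differentiating $\phi(\theta,\lambda,h_*)$ in $\theta$ and using $h_*=f_{\theta,\lambda}(h_*)$ to kill the $\partial_h\phi$ term. Concentration of $\frac1N\log Z$ around its mean is then the routine Azuma argument, completing both bounds.
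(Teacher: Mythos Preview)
Your final route---integrating $\partial_\theta \log Z_{\theta,\lambda}$ from $\theta=0$ using the edge correlation supplied by Lemma~\ref{lemma:Energy}, together with the observation that $\partial_h\phi|_{h=h_*}=0$ so that $\partial_\theta\varphi$ reduces to $\partial_\theta\phi$ evaluated at $h_*$---is exactly the paper's argument. The paper's execution differs from yours in one streamlining: rather than first computing $\E\varphi_N$ and then invoking Azuma for concentration, it bounds $\E|\varphi_N(\theta)-\varphi(\theta)|\le\int_0^\theta\E|\partial_\theta\varphi_N-\partial_\theta\varphi|\,\de\theta'$ directly and applies Markov's inequality, so no martingale step is needed at all (boundedness of the integrand by $(k+1)/(1-\theta^2)$ plus the whp statement of Lemma~\ref{lemma:Energy} gives $L^1$ smallness of the integrand, and dominated convergence handles the $\theta'$-integral). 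One subtlety you omit but the paper addresses: Lemma~\ref{lemma:fproperties}, on which Lemma~\ref{lemma:Energy} rests, is stated for $\lambda>0$; the case $\lambda=0$ is recovered by a uniform-in-$N$ continuity argument in $\lambda$.

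Your initial magnetization-decomposition detour, by contrast, is not used in the paper's proof of this lemma and would not close as written: Lemma~\ref{lmm:Zmagnetized} gives only $\E\Zh_{\theta,M}\le C\,2^Ne^{DNM^2}$ with an unspecified constant $D$, so the first-moment upper bound yields a rate depending on $D$ rather than $\varphi$, and even a sharp computation of $\E\Zh_{\theta,M}$ would still require a separate annealed-equals-quenched argument before the rate $\Phi$ could be identified with $\varphi$. The paper reserves Lemma~\ref{lmm:Zmagnetized} for a different purpose, namely bounding $\prob\{|\oX|\le\delta\}$ in the proof of Theorem~\ref{thm:ferromagnet}.
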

\begin{proof}
Let $\varphi_N(\theta,\lambda)\equiv N^{-1}\log Z_{\theta,\lambda}$.
The proof consists in showing that 
$\E|\varphi_N(\theta,\lambda)
-\varphi(\theta,\lambda)|\stackrel{N}{\to} 0$, whence the thesis follows 
by Markov inequality applied to the event 
$(Z_{\theta,\lambda}/e^{N\varphi})\notin[e^{-N\ve}, e^{N\ve}]$.

It can be proved that $\varphi_N(\theta,\lambda)$ is uniformly
(in $N$) 
continuous with respect to $\lambda$. We can therefore restrict, without
loss of generality to $\lambda>0$.

Next we notice that the above claim is true for 
$\theta=0$ by elementary algebra: 
$Z_{0,\lambda} = 2^{N} =e^{N\varphi(0,\lambda)}$.  
In  order to prove it for $\theta>0$, we write
(omitting the dependence on $\lambda$ that is fixed throughout)
\begin{eqnarray*}
\E|\varphi_N(\theta)
-\varphi(\theta)|\le \int_{0}^{\theta}\!\!
\E\left|\partial_\theta \varphi_N(\theta') 
-\partial_\theta \varphi(\theta')\right|\, \de\theta'\, .
\end{eqnarray*}
We will then show that
$\left|\partial_\theta \varphi_N(\theta,\lambda) 
-\partial_\theta \varphi(\theta,\lambda)\right|$
is bounded by $(k+1)/(1-\theta^2)$, and is smaller than $\ve$ whp for any $\ve>0$.
This implies the thesis by applying dominated convergence theorem to the
above integral.

An elementary calculation omitted from this abstract leads to 
$(1-\theta^2)\partial_{\theta}\varphi(\theta,\lambda)=
\frac{k+1}{2}\frac{\theta + h_*^2}{1+\theta h_*^2}$.
Analogously, simple calculus yields
$(1-\theta^2)\partial_\theta \log Z_{\theta,\lambda} = 
\sum_{(k,l) \in E}\E\{X_k X_l|G_N\}$, and therefore 
$(1-\theta^2)\partial_{\theta}\varphi_N(\theta,\lambda)=
\frac{k+1}{2}\E\{X_k X_l|G_N\}$ averaged over a uniformly random edge 
$(k,l)\in E$. As a consequence we have
$|\partial_{\theta}\varphi|,|\partial_{\theta}\varphi_N|\le
(k+1)/2(1-\theta^2)$ and, because of Lemma \ref{lemma:Energy}
$|\partial_{\theta}\varphi-\partial_{\theta}\varphi_N|\le\ve$
whp. This proves our claim.
\end{proof}
\begin{proof}[Proof of Theorem \ref{thm:ferromagnet}]
Throughout the proof, we set $\lambda=0$. Let  
us first prove that $k\theta \le 1$ reconstruction is unsolvable, i.e.
for any $\ve>0$ there exists $t$ such that 
$||\prob_{r,\cBall(r,t)}\{\,\cdot\,,\,\cdot\,|G_N\}-\prob_{r}\{\,\cdot\,|G_N\}\prob_{\cBall(r,t)}\{\,\cdot\,|G_N\}||_{\sTV}\le \ve$ whp.
By the Markov property of $\prob\{\,\cdot\,|G_N\}$,
(and using shorthands $\cBall$ for $\cBall(r,t)$ and $\Edge$ for $\Edge(r,t)$)
\begin{align}
||\prob_{r,\cBall}&\{\,\cdot\,,\,\cdot\,|G_N\}
-\prob_{r}\{\,\cdot\,|G_N\}\prob_{\cBall}\{\,\cdot\,|G_N\}||_{\sTV}\le
\label{eq:UniformBound}\\
&\le \sup_{\ux_{\Edge}} ||\prob_{i|\Edge}
\{\,\cdot\,|\uX_{\Edge}=\ux_{\Edge},G_N\}-\prob_{r}\{\,\cdot\,|G_N\}||_{\sTV}\, .\nonumber
\end{align}
By symmetry of $\prob\{\,\cdot\,|G_N\}$ under exchange of $+1$ and $-1$
at $\lambda=0$, cf. Eq.~(\ref{eq:IsingExplicit}), $\prob_{r}\{+1|G_N\}=1/2$.
On the other hand, by Griffiths inequalities, the probability for $X_r=+1$
is a monotone function of the values other spins are conditioned to.
Therefore the right hand side of Eq.~(\ref{eq:UniformBound}) equals
$|\prob_{r|\Edge}\{+1|\uX_{\Edge}=\underline{+1},\Ball\}-1/2|$
(we emphasized that, conditional on $\uX_{\Edge}$,
$X_r$ depends on $G_N$ only through $\Ball$). 

Finally, we recall that $\Ball(r,t)$ is with high probability a $k+1$
regular tree of depth $t$ rooted at $r$. Assuming this to be the case,
the conditional distribution of the root variable can be 
computed through a recursive dynamic-programming procedure, that we omit.
The result is (for $h_{+,s}\equiv f^{\circ s}_{\theta,0}(+1)$)
\begin{eqnarray*}
\prob_{i|\Edge}\{+1|\uX_{\Edge}=\underline{+1},\Ball\} = \frac{1}{2}
\Big\{1+g_{\theta,0}(h_{+,t-1})\Big\}\, ,
\end{eqnarray*}
where $g_{\theta,\lambda}(h)$ is defined as $f_{\theta,\lambda}(h)$,
cf. Eq.~(\ref{eq:fdef}), with $k$ replaced by $k+1$.
The thesis follows from Lemma \ref{lemma:fproperties}.

We shall now prove that non-reconstructibility implies  $k\theta\le 1$.
If $\oX\equiv N^{-1}\sum_iX_i$ is the `magnetization,' 
we claim that  non-reconstructibility implies $|\oX|\le \delta$ whp for any
$\delta>0$. In fact, because of 
non-reconstructibility, we can fix $t$ in such a way that 
$||\prob_{r,\cBall(r,t)}\{\,\cdot\,,\,\cdot\,|G_N\}-
  \prob_{r}\{\,\cdot\,|G_N\}\prob_{\cBall(r,t)}\{\,\cdot\,,|G_N\}||_{\sTV}
\le\delta$ whp.
Further notice that $\E\{\oX^2\}=\E\{X_rX_j\}$
for two uniformly random vertex $r,j\in[N]$.
Since $r\notin \Ball(i,t)$ whp and $\E\{X_r|G_N\}=0$ by symmetry, we then 
have $\E\{X_rX_j|G_N\}\le \delta$ whp, and, as a consequence, 
$\E\{\oX^2\}\le 2\delta$ for all $N$ large enough.
The claim follows from this result together with $|\oX|\le 1$

The thesis is proved by contradiction showing that for 
$k\theta >1$, there is some $\delta>0$ such that $|\oX|>\delta$
whp. Denote by $\Typ$ the set of graphs $G_N$ such that 
$Z_{\theta,0}\ge e^{N[\varphi(\theta,0)-\xi]}$ for some 
$\xi$ to be fixed below. Then, for any $G_N\in \Typ$, by 
Eq.~(\ref{eq:MagnPartFun}) and discussion below,  
\begin{eqnarray*}
\prob\{|\oX|\le \delta|G_N\}\le e^{-N[\varphi(\theta,0)-\xi]}\,
\sum_{|M|\le \delta} \Zh_{\theta,M}\, .
\end{eqnarray*}
Since by Lemma \ref{lmm:Ztotal}, $\prob\{|\oX|\le \delta\}\le
\E\{\prob\{|\oX|\le \delta|G_N\}\,\ind_{G_N\in\Typ}\}+o_N(1)$, we 
then have (estimating $\E\Zh_{\theta,M}$ with Lemma 
\ref{lmm:Zmagnetized},
\begin{eqnarray*}
\prob\{|\oX|\le \delta\}\le N\,
e^{-N[\varphi(\theta,0)-\xi]}\times C2^{N}\,e^{ND\delta^2}+o_N(1)\, .
\end{eqnarray*}
The proof follows by showing that $\varphi(\theta,0)>\log 2$,
and taking $\delta$ and $\xi$ small enough to make the first term above 
exponentially small as $N\to\infty$.

To show that  $\varphi(\theta,0)>\log 2$ for $k\theta>1$, observe that
$\phi(\theta,0,0) = \log 2$ and (after some calculus)
\begin{eqnarray*}
\left.\frac{\partial\phi}{\partial h}\right|_{\lambda=0}=-
\frac{(k+1)\theta h}{1+\theta h^2}+\frac{(k+1)
\theta f_{\theta,0}(h)}{1+\theta h f_{\theta,0}(h)}\, .
\end{eqnarray*}
Since $f_{\theta,0}(0)=k\theta h+O(h^2)$, $h=0$ is 
a local minimum (at $\theta$ fixed) of $\phi(\theta,0,h)$.
Further $\partial_{h}\phi=0$ if and only if $h=f_{\theta,\lambda}(h)$.
Recall \cite{Martinelli} that, for, $k\theta>1$, $f_{\theta,\lambda}$ has 
3 fixed points: $h=0$ and $h=\pm h_*$, for $h_*>0$. 
As a consequence $h_*$ must be a local maximum and hence
$\varphi(\theta,0)=\phi(\theta,0,h_*)>\phi(\theta,0,0) = \log 2$.
\end{proof}
\vspace{0.cm}

{\bf  Acknowledgments}
\vspace{0.2cm}

This work was largely motivated by a discussion of one of the
authors (A.M.) with Elchanan Mossel, and by the desire to substantiate 
the claims made in that discussion. It is a pleasure to acknowledge 
the debt we have with him.
%
%
%
\bibliographystyle{amsalpha}

\end{document}